\newtheorem{theorem}{Theorem}[section]
\newtheorem{lemma}[theorem]{Lemma}
\newtheorem{corollary}[theorem]{Corollary}
\newtheorem{proposition}[theorem]{Proposition}
\newtheorem{assumption}{Assumption}
\newtheorem{claim}{Claim}
\newtheorem{example}[theorem]{Example}
\theoremstyle{definition}
\newcommand\diam{\operatorname{diam}}
\newcommand\dist{\operatorname{dist}}
\newcommand{\edalong}[1]{{\sc Exact Diameter-$#1$ Augmentation}}
\newcommand{\dalong}[1]{{\sc Diameter-$#1$ Augmentation}}
\newcommand{\medalong}[1]{{\sc Minimum Exact Diameter-$#1$ Augmentation}}
\newcommand{\mdalong}[1]{{\sc Minimum Diameter-$#1$ Augmentation}}
\newcommand{\mdilong}[1]{{\sc Minimum Distance-$#1$ Augmentation}}
\newcommand{\eda}[1]{{\sc $#1$-eda}}
\newcommand{\da}[1]{{\sc $#1$-da}}
\newcommand{\meda}[1]{{\sc $#1$-meda}}
\newcommand{\mda}[1]{{\sc $#1$-mda}}
\newcommand{\mdi}[1]{{\sc $#1$-mdisa}}
\begin{document}

\title{Complexity and algorithms for constant diameter augmentation problems}

\author{Eun Jung Kim\footnotemark[1] ,
 Martin Milani{\v c}\footnotemark[2] ,
J\'er\^ome Monnot\footnotemark[1] ,
Christophe Picouleau\footnotemark[3]}

\def\thefootnote{\fnsymbol{footnote}}

\footnotetext[1]{ \noindent
Universit\'e Paris-Dauphine, Paris, France.
Email: \texttt{eunjungkim78@gmail.com}}

\footnotetext[2]{ \noindent
FAMNIT and IAM, University of Primorska, Koper, Slovenia. Email: \texttt{martin.milanic@upr.si}}

\footnotetext[3]{ \noindent
Conservatoire National des Arts et M\'etiers, CEDRIC laboratory, Paris, France. Email: {\tt
christophe.picouleau@cnam.fr}}

\date{\today}

\maketitle

\begin{abstract}
We study the following problem: for given integers $d,k$ and graph~$G$, can we obtain a graph with diameter $d$ via at most $k$ edge deletions\,? We determine the computational complexity of this and related problems for different values of $d$.
\end{abstract}

\noindent
{\bf Keywords:} graph diameter, blocker problem, polynomial algorithm, {\sf NP}-complete.

\bigskip
\begin{center}
{\it To the memory of our colleague and friend J\'er\^ome.}
\end{center}

\section{Introduction}
\label{s-intro}

A typical graph modification problem aims to modify a graph parameter $\pi$, via a small number of operations. The operations involved may be vertex deletion, edge deletion, edge addition, edge contraction, etc.  That is, for a fixed graph operation,
we ask, given a graph $G$ and two integers~$k$ and~$d$, whether $G$ can be transformed into a graph $G'$ by using at most $k$ operations so that $\pi(G')\leq \pi(G)-d$ or $\pi(G')\geq \pi(G)+d$. Which inequality we want depends on the behavior of parameter $\pi$ under the fixed operation: we require $\pi(G')\leq \pi(G)-d$ whenever $\pi(G')\leq \pi(G)$ holds for any graph $G'$ obtained from $G$ by applying the given operation any number of times, and, similarly, $\pi(G')\geq \pi(G)+d$ whenever the inequality $\pi(G')\geq \pi(G)$ is guaranteed. Typically, $\pi(G)$ is defined as the optimal value of an instance of an optimization problem on graphs. Thus, such problems are called {\it blocker problems}, as the set of vertices or edges involved in the operations ``block''  the optimal value of the problem. Identifying the part of the graph responsible for a significant change of the parameter under consideration gives crucial information on the graph.

Blocker problems have been given much attention over the
last few years \cite{Bentz,CWP11,T10,BBKP18}.
Graph parameters considered were the matching number \cite{RBPDCZ10,ZRPWCB09},
the $s,t$-shortest path, the maximum flow and minimum $s,t$-cut values~\cite{CWP11}, the chromatic number \cite{BBPR,DPPR15}, the independence number \cite{BBPR,BTT11,DPPR15}, the clique number \cite{PBP14,DPPR15}, the domination number \cite{GPR19,PBP15}, and the vertex cover number \cite{BTT11,LSSS20,PBP15}, for instance. Concerning the operations involved to the minimal modification of a graph parameter or a graph property the reader can see \cite{DPPR15,LY80} for node deletion, \cite{PBP15,WTN83} for edge deletion, \cite{DPPR15,GPR19,HHLP,LSSS20,WTN83} for edge contraction.

Here, we aim to determine the number of edges to delete from a given graph to increase the diameter of the graph by a fixed amount. Our goal is to determine the computational complexity, that is, to either find polynomial-time algorithms or prove {\sf NP}-completeness, for several related problems.

We consider the following decision problems:

\begin{center}
\fbox{\parbox{0.95\linewidth}{\noindent
\edalong{d} (\eda{d}):\\[.8ex]
\begin{tabular*}{.93\textwidth}{rl}
{\em Input:} & A connected graph $G$.\\
{\em Question:} & Is there a set $F\subseteq E(G)$ such that $G-F$ is a connected graph of diameter $d$\,?
\end{tabular*}
}}
\end{center}

\begin{center}
\fbox{\parbox{0.95\linewidth}{\noindent
\dalong{d} (\da{d}):\\[.8ex]
\begin{tabular*}{.93\textwidth}{rl}
{\em Input:} & A connected graph $G$.\\
{\em Question:} & Is there a set $F\subseteq E(G)$ such that $G-F$ is a connected graph of diameter \\
& at least $d$\,?
\end{tabular*}
}}
\end{center}

\begin{center}
\fbox{\parbox{0.95\linewidth}{\noindent
\medalong{d} (\meda{d})\\[.8ex]
\begin{tabular*}{.93\textwidth}{rl}
{\em Input:} & A connected graph $G$, a positive integer $k$.\\
{\em Question:} & Is there a set $F\subseteq E(G)$ such that $|F|\le k$ and $G-F$ is a connected graph\\
& of diameter $d$\,?
\end{tabular*}
}}
\end{center}

\begin{center}
\fbox{\parbox{0.95\linewidth}{\noindent
\mdalong{d} (\mda{d})\\[.8ex]
\begin{tabular*}{.93\textwidth}{rl}
{\em Input:} & A connected graph $G$, a positive integer $k$.\\
{\em Question:} & Is there a set $F\subseteq E(G)$ such that $|F|\le k$ and $G-F$ is a connected graph\\
& of diameter at least $d$\,?
\end{tabular*}
}}
\end{center}

In all the above problems we consider $d$ to be a fixed constant independent of the input graph. If $d$ is part of input, then all four problems are {\sf NP}-complete as they generalize the {\sc Hamiltonian Path} problem (which is obtained by taking $d = |V(G)|-1$ and, when necessary, $k = |E(G)|$), see~\cite{SBVL87}.

\medskip

{\noindent {\bf Our results and organization of the paper.}} In Section~\ref{prelim} we give the main definitions and notations we use throughout the paper together with some preliminary observations about the decisions problems defined above. In Sections \ref{smalldiam} and \ref{3meda} we give polynomial-time algorithms for the \mda{3}, \meda{3}, and \eda{3} problems. Section \ref{5meda} is devoted to {\sf NP}-completeness results for \meda{k}, $k\ge 5$. We show that \meda{5} is {\sf NP}-complete for graphs of diameter $d\in\{3,4\}$, and show the {\sf NP}-completeness of \meda{k} for all $k\ge 6,$ for some different values of the diameter of the input graph. In Section \ref{concl} we give a conclusion and propose some future research problems.

\medskip

{\noindent {\bf Related work.} There is a large literature studying extremal questions related to diameter (see, e.g.,~\cite{Chung87}).
Among the questions most relevant to our study, let us mention the effect on the diameter of deleting $t$ edges from a $(t+1)$-edge-connected graph~\cite{ChungGarey}, the maximum number of edges in an $n$-vertex diameter-$d$-critical graph (that is, a graph with diameter $d$ such that the diameter increases upon deletion of any edge; see~\cite{LM16}) and the minimum number of edges in an $n$-vertex graph such that deleting any edge results in a graph with diameter $d$ (see~\cite{Chung}). A connection between the \meda{3} problem and Moore graphs will be established in Section~\ref{3meda}. From the algorithmic point of view, related problems extensively studied in the literature include the
complementary problem of adding a minimum number of edges to achieve diameter $d$ (see, e.g.,~\cite{FGGM15,GHN13,IYN06,LMT92,SBVL87}) and the
length-bounded max flow and min cut problems (see~\cite{BEHKKPSS10,MM03}).

\section{Notations and preliminaries}\label{prelim}

We only consider finite, simple, and undirected graphs. We refer to \cite{West} for undefined terminology.

Let $G=(V,E)$ be a graph. For a subset $F\subseteq E$, we let $G-F$ denote the subgraph of $G$ with vertex set~$V$ and edge set $E\setminus F$. The subgraph of $G$ \emph{induced by} a set $S\subseteq V$ is denoted by $G[S]$ and defined as $G[S]=(S,F)$ where $F=\{xy\in E(G)\vert x,y\in X\}$. The length of a path is its number of edges. Given two vertices $u$ and $v$, $\dist_G(u,v)$ denotes the minimum length of a path between $u$ and $v$ (note that since $G$ is assumed to be connected $\dist_G(u,v)$ is finite). A $u,v$-path of minimum length is called a shortest $u,v$-path.  The \emph{diameter} of $G$ is the maximum length of a shortest path, that is, $\diam(G)=\max_{(u,v)\in V^2}\{\dist_G(u,v)\}$. A \emph{diametral path} in a graph $G$ is any shortest path between vertices $u$ and $v$ such that $\dist_G(u,v) = \diam(G)$.
A \emph{cut-vertex} in a connected graph $G$ is a vertex $v\in V(G)$
such that $G-v$ is not connected.
A graph is \emph{biconnected} if it is connected and has no cut-vertices.
A \emph{cut-edge} in a connected graph $G$ is an edge $e\in E(G)$ such that $G-e$ is not connected.
A vertex in a graph is \emph{universal} if it is adjacent to all other vertices. The \emph{girth} of a graph $G$, denoted $g(G)$, is the shortest length of a cycle in $G$ (or $\infty$ if $G$ is acyclic). A \emph{triangle} in a graph $G$ is a cycle of length three. Given two graphs $G$ and $H$, we say that $G$ is \emph{$H$-free} if no induced subgraph of $G$ is isomorphic to $H$. The \emph{$\mathcal{F}$-free graphs} where $\mathcal{F}$ is a finite set of graphs are defined analogously.

\subsection*{(Non)monotonicity}
It follows directly from the definitions that the \da{d} and \mda{d} problems satisfy a certain monotonicity,
in the sense that for every $d<d'$, every graph $G$ that is a yes instance to the \da{d'} problem
is also a yes instance to the \da{d} problem, and similarly, every pair $(G,k)$
that is a yes instance to the \mda{d'} problem is also a yes instance to the \mda{d} problem.
Unsurprisingly, the analogous monotonicity fails to hold for the \eda{d} and \meda{d} problems.
It is not difficult to construct graphs $G$ and pairs of positive integers $d$, $d'$, and $k$, with $d<d'$ such that
$G$ is a yes instance to \eda{d} as well as to \eda{d'}, while $(G,k)$ is a yes instance to \meda{d'} but not to \meda{d}.
In other words, there is a set of at most $k$ edges such that deleting them from $G$ results in a graph with diameter $d'$ but
there is no set of at most $k$ edges whose deletion would produce a graph with diameter $d$.

\begin{example}
The graph shown in Fig.~\ref{fig:example} has diameter $3$,
deleting any of the edges $v_1v_6$, $v_3v_4$, $v_4v_5$, $v_5v_6$ results in a graph with diameter $5$,
while deleting any other edge does not change the diameter.
However, a graph with diameter $4$ can be obtained by deleting, for example, the two edges $v_0v_1$ and $v_0v_3$.

\begin{figure}[h!]
  \centering
   \includegraphics[width=0.25\textwidth]{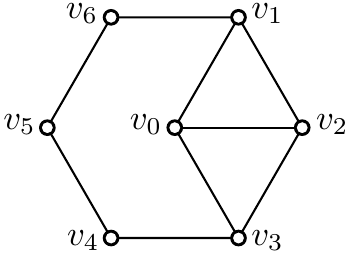}
\caption{A graph in which the diameter can be increased by two by deleting a single edge,
while two edges need to be deleted to increase the diameter by one.}\label{fig:example}
\end{figure}

\end{example}

\subsection*{The \da{d} problem}

We show how the \da{d} problem is related to the problem of searching for a path of length $d$. Then we deduce that it is polynomial-time solvable.

\begin{proposition}\label{prop:ed}
For a connected graph $G$ and a positive integer $d$, the following are equivalent.
\begin{enumerate}
  \item $G$ is a yes instance to the \da{d} problem.
  \item $G$ has a connected spanning subgraph of diameter at least $d$.
  \item $G$ has a spanning tree of diameter at least $d$.
  \item $G$ contains a path of length at least $d$.
\end{enumerate}
\end{proposition}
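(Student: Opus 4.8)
The plan is to prove the equivalence by establishing the cycle of implications $(1)\Rightarrow(2)\Rightarrow(3)\Rightarrow(4)\Rightarrow(1)$, since each individual link is essentially a short observation.

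\medskip

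\noindent\textbf{Plan.}
First, $(1)\Rightarrow(2)$ is immediate from the definition: if $G$ is a yes instance to \da{d}, there is a set $F\subseteq E(G)$ such that $G-F$ is connected of diameter at least $d$, and $G-F$ is a connected spanning subgraph of $G$ (it has the same vertex set). For $(2)\Rightarrow(3)$, suppose $H$ is a connected spanning subgraph of $G$ with $\diam(H)\ge d$. Pick two vertices $u,v$ with $\dist_H(u,v)=\diam(H)\ge d$ and take any spanning tree $T$ of $H$. Then $\dist_T(u,v)\ge \dist_H(u,v)\ge d$ because a tree has a unique $u,v$-path and it can only be longer than a shortest path in the supergraph $H$; hence $\diam(T)\ge d$, and $T$ is a spanning tree of $G$. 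For $(3)\Rightarrow(4)$: if $T$ is a spanning tree of $G$ with $\diam(T)\ge d$, then a diametral path of $T$ is a path in $T$, hence in $G$, of length $\ge d$.

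\medskip

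\noindent The only slightly less trivial implication is $(4)\Rightarrow(1)$. Suppose $G$ contains a path $P=x_0x_1\cdots x_\ell$ with $\ell\ge d$. I would build a spanning connected subgraph $H$ of $G$ in which $\dist_H(x_0,x_\ell)\ge d$: start from $P$, and then attach the remaining vertices of $G$ one at a time. Process the vertices of $V(G)\setminus V(P)$ in an order such that each new vertex $w$ has a neighbour (in $G$) among the already-included vertices — such an order exists because $G$ is connected — and add exactly one such edge for each $w$. The resulting graph $H$ is a connected spanning subgraph of $G$. It remains to check that $\diam(H)\ge d$; taking $F=E(G)\setminus E(H)$ then shows $G$ is a yes instance. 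A clean way to guarantee the diameter bound is to make $H$ itself a tree: since $P$ is an induced path or not, one should be careful, but one can instead simply take any spanning tree $T$ of $G$ that contains all edges of $P$ — this exists precisely when $P$ has no chords in the spanning subgraph we grow, so the safest route is to grow $H$ as above and then argue directly. In $H$, every $x_0,x_\ell$-path must use vertices outside $P$ only via the pendant attachments, which are dead ends, so the unique $x_0,x_\ell$-path in $H$ is $P$ itself, giving $\dist_H(x_0,x_\ell)=\ell\ge d$.

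\medskip

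\noindent\textbf{Main obstacle.} The one point requiring care in $(4)\Rightarrow(1)$ is that the path $P$ may have chords in $G$, so simply taking $G$ itself (or an arbitrary spanning tree) need not realize distance $\ge d$ between the endpoints of $P$. The fix is to delete all chords of $P$ together with all edges not needed to keep the non-$P$ vertices attached as pendants, as described above; then no shortcut between $x_0$ and $x_\ell$ survives. Finally, I would note that the equivalence $(1)\Leftrightarrow(4)$ together with the fact that a longest path (equivalently, detecting a path of length $\ge d$ for fixed $d$) can be found in polynomial time — e.g.\ by colour-coding, or trivially by brute force over $d$-tuples since $d$ is a constant — yields the claimed polynomial-time solvability of \da{d}.
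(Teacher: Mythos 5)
Your proof is correct and takes essentially the same route as the paper: the only non-trivial link is getting from a long path $P$ back to a yes instance, and both arguments do this by constructing a spanning tree of $G$ that contains all edges of $P$ (the paper via a DFS started along $P$, you via greedily attaching each remaining vertex by a single edge), so that the endpoints of $P$ stay at distance at least $d$. One aside in your write-up is off --- a spanning tree of $G$ containing all edges of $P$ always exists, since the edge set of a path is acyclic and any acyclic edge set of a connected graph extends to a spanning tree, so the ``exists precisely when $P$ has no chords'' caveat is a red herring --- but your final argument does not rely on it and the proof stands.
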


\begin{proof}
The equivalence between statements 1 and 2 is immediate from the definition of the \da{d} problem.
Clearly, statement 3 implies statement 2. Moreover, statement 2 implies statement 4, as if $G'$ is a
connected spanning subgraph of $G$ of diameter at least $d$, then any diametral path in $G'$ is a path in $G$ of length at least $d$.
Finally, we argue that statement 4 implies statement 3.
Let $P$ be a path in $G$ of length at least $d$. Then, $G$ has a depth-first traversal starting with the edges of $P$ along the path.
The corresponding DFS tree is a spanning tree of $G$ of diameter at least $d$, implying statement 3.
\end{proof}

Since for each fixed $d$, the presence of a path of length at least $d$ can be tested in polynomial time,
Proposition~\ref{prop:ed} implies the following.

\begin{corollary}\label{cor:da}
For every positive integer $d$, the \da{d} problem is solvable in polynomial time.
\end{corollary}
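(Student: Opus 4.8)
The plan is to read the statement off directly from Proposition~\ref{prop:ed}. That proposition tells us that a connected graph $G$ is a yes instance to \da{d} if and only if $G$ contains a path of length at least $d$; and since any initial segment of such a path is itself a path of length exactly $d$, this is in turn equivalent to $G$ containing a path on exactly $d+1$ vertices. So it suffices to decide, in polynomial time, whether $G$ has such a path.

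Since $d$ is a fixed constant, even the crudest approach works. First I would enumerate all ordered $(d+1)$-tuples $(v_0,\dots,v_d)$ of pairwise distinct vertices of $G$; there are at most $n^{d+1}$ of them, where $n=|V(G)|$. For each such tuple I would check in $O(d)$ time whether $v_0v_1\cdots v_d$ is a path in $G$, i.e.\ whether $v_iv_{i+1}\in E(G)$ for every $i\in\{0,\dots,d-1\}$. The algorithm answers ``yes'' as soon as some tuple passes this test, and ``no'' otherwise. The total running time is $O(d\cdot n^{d+1})$, which is polynomial for every fixed $d$, and correctness is immediate from the equivalence of items~1 and~4 in Proposition~\ref{prop:ed}.

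I do not expect any genuine obstacle here. If one cares about the exponent, the brute-force enumeration can be replaced by a faster long-path detection routine — for instance the color-coding technique of Alon, Yuster and Zwick decides the existence of a path on $d+1$ vertices in time $2^{\Oh(d)}\cdot n^{\Oh(1)}$ — but for the statement as phrased this refinement is unnecessary. The only point worth flagging is that the argument genuinely relies on $d$ being a fixed constant rather than part of the input, which is consistent with the remark in the introduction that all four problems become {\sf NP}-complete once $d$ is part of the input, by reduction from {\sc Hamiltonian Path}.
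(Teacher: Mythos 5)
Your proposal is correct and follows the paper's own route: the paper derives the corollary directly from the equivalence of items~1 and~4 in Proposition~\ref{prop:ed} together with the observation that, for fixed $d$, the presence of a path of length at least $d$ can be tested in polynomial time. You merely make explicit the brute-force $\Oh(d\cdot n^{d+1})$ enumeration that the paper leaves implicit, which is a fine (if unnecessary) elaboration.
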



One may wonder whether there are similar characterizations of yes instances for the \eda{d} problem as those given in Proposition~\ref{prop:ed}, in particular, whether the existence of a connected spanning subgraph of diameter $d$ implies the existence of a spanning tree of diameter $d$. While this implication clearly fails if $d$ is allowed to be the diameter of $G$ (take for instance any cycle of length at least~$4$), examples can also be constructed showing that the implication fails if $d<\diam(G)$.

\begin{example}
Let $G$ be the graph obtained from the $5$-cycle by adding to it a new vertex $v$ and connecting it by an edge to exactly two non-adjacent vertices of the $5$-cycle. Graph $G$ is of diameter two, it has a connected spanning subgraph of diameter three (for example, the subgraph obtained by deleting one of the two edges incident with $v$) but it is not difficult to verify that $G$ has no spanning tree of diameter three.
\end{example}

\subsection*{Complete graphs}

In the case when the input graph $G$ is complete, the \da{d}, \eda{d}, \mda{d}, and \meda{d} problems are either trivial or related to a known and solved question in extremal graph theory.

First, since the maximum diameter of an $n$-vertex connected graph is $n-1$, the complete graph $K_n$ is a yes instance to the \da{d} (or to the \eda{d}) problem if and only if \hbox{$n\ge d+1$}.

Second, the \meda{d} and the \mda{d} problems on $K_n$ are equivalent to the problems of deleting the smallest number of edges from $K_n$ so that the resulting graph is of diameter exactly $d$ (resp., at least $d$). Both of these questions are equivalent to a well-known problem in extremal graph theory asking for the maximum number of edges in an $n$-vertex graph of diameter $d$. O.~Ore proved that the maximum number of edges in a graph with $n$ vertices and diameter $d$ is upper bounded by $d+{1\over 2}(n-d-1)(n-d+4)$ and this bound is tight~\cite{Ore68}.
Thus, the answer to either \meda{d} or \mda{d} problem given $K_n$ and a positive integer $k$ is yes if and only if
$k\ge {n\choose 2} - d - {1\over 2}(n-d-1)(n-d+4)$.

For later use, we make note of the above observations.

\begin{theorem}[Ore]\label{thm:complete}
For all $d\ge 1$, the \eda{d}, \mda{d}, and \meda{d} problems are solvable in linear time for complete graphs.
\end{theorem}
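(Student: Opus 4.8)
The plan is to verify each of the three claims about complete graphs separately, since the theorem merely collects the observations made in the preceding paragraphs and the only thing to check is that the resulting procedures run in linear time in the size of the input (which, for $K_n$, we may take to be $O(n^2)$, or even $O(1)$ plus the reading of $n$ if $n$ is given explicitly).

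First I would handle \eda{d}. By the remark that the maximum diameter of an $n$-vertex connected graph is $n-1$, and since every integer value between $1$ and $n-1$ is attained as the diameter of some spanning subgraph of $K_n$ (for instance by deleting edges to leave a path on $\ell+1$ vertices plus the remaining $n-\ell-1$ vertices attached as leaves to one endpoint, or more simply by taking the spanning tree that is a path of the right length when $d \le n-1$), the answer is yes if and only if $d \le n-1$. Actually for \eda{d} I only need: $K_n$ has a connected spanning subgraph of diameter exactly $d$ iff $1 \le d \le n-1$; the ``only if'' is the diameter bound, and the ``if'' follows by exhibiting a caterpillar. This is an $O(1)$ test once $n$ is known.

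Next, for \mda{d} and \meda{d} on $K_n$, I would invoke Ore's theorem \cite{Ore68} as quoted: the maximum number of edges in an $n$-vertex graph of diameter exactly $d$ equals $M(n,d) := d + \frac{1}{2}(n-d-1)(n-d+4)$, and this bound is attained. The key structural point to spell out is that deleting $F$ from $K_n$ yields a graph with diameter $\ge d$ if and only if it yields one with diameter exactly $d'$ for some $d' \ge d$, and that among all $n$-vertex graphs of diameter at least $d$ the maximum edge count is still $M(n,d)$ — this uses the monotonicity already recorded in the excerpt (a graph of diameter $d' > d$ can have an edge added to reduce its diameter, but we want the maximum, so we should note that $M(n,d)$ is non-increasing in $d$, hence $\max_{d' \ge d} M(n,d') = M(n,d)$, and also that any diameter-$\ge d$ graph has at most $M(n, d)$ edges directly from Ore applied to its actual diameter). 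Consequently, a set $F$ with $|F| \le k$ and $\diam(K_n - F) \ge d$ exists iff we can remove at least $\binom{n}{2} - M(n,d)$ edges and stay at diameter $\ge d$, i.e.\ iff $k \ge \binom{n}{2} - M(n,d)$; and since Ore's extremal graphs realize diameter exactly $d$, the same threshold works for \meda{d}. Both are $O(1)$ arithmetic tests (plus reading $n, k$), hence linear time.

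The only mild obstacle is the bookkeeping around ``diameter at least $d$'' versus ``diameter exactly $d$'': I must make sure that the threshold $\binom{n}{2} - M(n,d)$ is simultaneously correct for both problems, which reduces to the two facts that (i) Ore's extremal graph for parameter $d$ has diameter \emph{exactly} $d$ (given), so it is feasible for \meda{d}, and (ii) $M(n,d') \le M(n,d)$ for $d' \ge d$, so no diameter-$\ge d$ graph beats the count $M(n,d)$; fact (ii) is a one-line check on the quadratic $\frac{1}{2}(n-d-1)(n-d+4)$ as a function of $d$ on the relevant range $1 \le d \le n-1$, together with the trivial cases $d \in \{n-1\}$ where $M(n,n-1) = n-1$. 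I would also note the degenerate boundary cases ($n \le d$, where both answers are vacuously governed by connectivity/the diameter bound) to keep the statement ``for all $d \ge 1$'' honest. Everything else is immediate from the text preceding the theorem.
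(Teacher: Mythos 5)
Your proposal is correct and follows essentially the same route as the paper, which derives the theorem from the preceding observations: the test $n\ge d+1$ for \eda{d} and the threshold $k\ge \binom{n}{2}-d-\frac{1}{2}(n-d-1)(n-d+4)$ from Ore's extremal bound for \mda{d} and \meda{d}. Your extra care about why the same threshold serves both the ``exactly $d$'' and ``at least $d$'' versions (tightness of Ore's bound plus monotonicity of $M(n,d)$ in $d$) is a detail the paper leaves implicit, but it is a correct and welcome addition rather than a deviation.
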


\subsection*{The \eda{d}, \mda{d}, and \meda{d} problems for $d\in \{1,2\}$}

We give some observations solving the cases  $d \in \{1,2\}$.
First, note that for each of the \eda{d}, \mda{d}, and \meda{d} problems, we may assume that $\diam(G)<d$ since otherwise
the answer is trivial:
\begin{itemize}
  \item If $\diam(G)>d$, then $G$ is a no instance to the \eda{d} problem (since no spanning subgraph of $G$ has diameter exactly $d$).
  Similarly, any $(G,k)$ with $k\ge 0$ is a yes instance to the \mda{d} problem and a no instance to the \meda{d} problem.
  \item If $\diam(G)=d$, then $G$ is a yes instance to the \eda{d} problems (take $F = \emptyset$)
  and any $(G,k)$ with $k\ge 0$ is a yes instance to the \mda{d} and \meda{d} problems.
\end{itemize}

Since complete graphs are the only graphs with diameter less than $2$, the above observation implies that all three problems are polynomially solvable for $d \in \{1,2\}$.

\section{A polynomial-time algorithm for the \mda{3} problem}\label{smalldiam}

We prove the following.

\begin{theorem}\label{mda3}
The \mda{3} problem is solvable in polynomial time.
\end{theorem}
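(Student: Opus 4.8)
The goal is to decide, given a connected graph $G$ and an integer $k$, whether we can delete at most $k$ edges so that the result is connected and has diameter at least $3$. By the monotonicity noted in the excerpt and the trivial cases, I may assume $\diam(G) \le 2$; if $\diam(G) \ge 3$ the answer is yes with $F = \emptyset$. So the real task is: starting from a graph of diameter $1$ or $2$, delete as few edges as possible to reach a connected graph with diameter at least $3$, and compare that minimum to $k$. Diameter at least $3$ means there exist two vertices $u,v$ with $\dist(u,v) \ge 3$; equivalently, $u$ and $v$ are non-adjacent and have no common neighbour. So I would reformulate the problem as: over all ordered (or unordered) pairs $\{u,v\}$ of vertices, find the cheapest set $F$ of edges whose removal makes $G-F$ connected while forcing $\dist_{G-F}(u,v) \ge 3$, then minimise over pairs. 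Since there are only $O(n^2)$ pairs, it suffices to solve, in polynomial time, the following localized problem for each fixed target pair $\{u,v\}$.

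**The localized problem for a fixed pair $\{u,v\}$.**

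Fix $u,v$ with the intent that in $G-F$ they should be at distance $\ge 3$. First, any edge $uv$ (if present) must be deleted. Second, for every common neighbour $w$ of $u$ and $v$ in $G$, at least one of the two edges $uw$, $vw$ must be deleted — so $w$ ``costs'' one deletion, and we are free to choose which of the two incident edges to cut. Third, and this is the subtle part, after these forced deletions the graph must remain connected. The edges we are allowed to remove for free (beyond the forced ones) are only helpful if they reduce the cost, which they cannot; so the optimal $F$ for this pair consists of exactly $uv$ (if present) plus, for each common neighbour $w$, exactly one of $\{uw, vw\}$ — the only freedom is the binary choice at each $w$, made so as to preserve connectivity. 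Thus the cost is determined: it is $|N(u)\cap N(v)| + [\,uv \in E\,]$, and the question reduces to whether the binary choices can be made so that $G-F$ stays connected. I would handle connectivity by a careful case analysis: the potentially dangerous vertices are $u$, $v$, and the common neighbours $w$ that might get isolated or disconnected; one shows that a bad choice is forced only in degenerate small configurations (e.g. when some $w$ has no neighbours other than $u$ and $v$, or when $u$ or $v$ would become isolated), and these can be detected directly. In all other cases a connectivity-preserving assignment exists, for instance by a matroid/exchange argument or simply by always cutting the edge to whichever endpoint has higher degree and patching up the finitely many exceptions.

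**Assembling the algorithm and the main obstacle.**

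The overall algorithm: compute $\diam(G)$ (polynomial); if it is $\ge 3$ answer yes; otherwise, for each pair $\{u,v\}$ compute the forced cost $c(u,v) = |N(u)\cap N(v)| + [\,uv\in E\,]$, check whether a connectivity-preserving choice of incident edges exists (the case analysis above), and if so record $c(u,v)$; finally answer yes iff $\min c(u,v) \le k$ over all feasible pairs. Everything here is polynomial: $O(n^2)$ pairs, and for each pair the cost and the connectivity check take polynomial time. The main obstacle I anticipate is the connectivity bookkeeping in the localized problem: proving rigorously that, outside an explicitly enumerable list of small bad configurations, the binary choices at the common neighbours can always be made to keep $G-F$ connected, and that when they cannot, this pair simply contributes no feasible solution (so we move on to another pair). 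A clean way to argue this is to observe that $G - F$ contains $G$ minus a set of edges all incident to $u$ or $v$, so the only vertices whose connectivity to the rest can be threatened are $u$, $v$, and the common neighbours; one then checks that as long as $u$ has a private neighbour or $v$ has one (or $u,v$ have degree large enough), a greedy assignment works, and the remaining handful of cases are finite and can be resolved by inspection.
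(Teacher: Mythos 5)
Your outer strategy coincides with the paper's: reduce \mda{3} to $O(n^2)$ instances of the per-pair problem of deleting the fewest edges so that the graph stays connected and a fixed pair $u,v$ reaches distance at least $3$ (this is the \mdi{3} problem in the paper's terminology). Where you diverge is in how that per-pair problem is solved: the paper simply invokes a known polynomial-time algorithm of Mahjoub and McCormick, which solves even a weighted version of \mdi{3} by reduction to max flow, whereas you attempt a direct combinatorial argument. Your first reduction step is sound and worth making explicit: any feasible $F$ for the pair $\{u,v\}$ contains a sub-solution consisting of $uv$ (if present) plus exactly one of $uw,vw$ for each common neighbour $w$, and such a sub-solution is again feasible (removing fewer edges cannot hurt connectivity, and it still destroys the edge $uv$ and all common neighbours), so the cost is indeed forced and only the binary choice at each common neighbour matters.

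However, the connectivity analysis --- which you yourself flag as the main obstacle --- is not correct as described. The obstructions are not ``finitely many small bad configurations'' resolvable by inspection, nor is ``cut the edge to the higher-degree endpoint'' a justified rule. The right way to organize the argument is: all deleted edges are incident to $u$ or $v$; every vertex other than $u,v$ that had a neighbour in $\{u,v\}$ retains one after the deletion; hence every connected component of $G-\{u,v\}$ (of which there may be arbitrarily many, so this is not a finite case analysis) remains attached to $\{u,v\}$, and $G-F$ is connected if and only if some single component retains edges to both $u$ and $v$. Working this out per component, a connectivity-preserving assignment exists if and only if some component $C$ of $G-\{u,v\}$ contains two distinct vertices $x\neq y$ with $xu\in E(G)$ and $yv\in E(G)$ (a component whose only attachment is a single common neighbour can be joined to only one of $u,v$). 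This is checkable in polynomial time and the choices in distinct components are independent, so your plan can be completed --- but the statement and proof of this characterization is precisely what is missing, and without it the proposal is not yet a proof. Alternatively, you could sidestep the issue entirely, as the paper does, by citing the max-flow algorithm for \mdi{3}.
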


The algorithm is based on a polynomial-time algorithm for the following problem for the case $d = 3$.

\begin{center}
\fbox{\parbox{0.95\linewidth}{\noindent
\mdilong{d} (\mdi{d})\\[.8ex]
\begin{tabular*}{.93\textwidth}{rl}
{\em Input:} & A connected graph $G$, distinct vertices $x,y$ of $G$, a positive integer $k$.\\
{\em Question:} & Is there a set $F\subseteq E(G)$ such that $|F|\le k$,  $G-F$ is a connected graph\\
& and  $\dist_{G-F}(x,y)\geq d$\,?
\end{tabular*}
}}
\end{center}

 We note that the inapproximability of two optimization problems related to \mdi{d} when $d$ is not constant was established in~\cite{KBBEGRZ08}. Beier et al.~showed that for all $d\ge 4$, the optimization version of the \mdi{d} problem is {\sf NP}-hard to approximate within a factor of 1.1377~\cite{BEHKKPSS10}. On the other hand, Mahjoub and McCormick showed that even a weighted version of the \mdi{3} problem is solvable in polynomial time, by reducing it to a max flow problem~\cite{MM03}.
Hence, taking a weight $w_e = 1$ for each edge $e\in E(G)$ in their construction, we obtain the following.

\begin{theorem}[Mahjoub and McCormick]\label{thm:mdi3}
The \mdi{3} problem is solvable in polynomial time.
\end{theorem}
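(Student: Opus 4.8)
The plan is to reduce \mdi{3} to a minimum $x$--$y$ edge cut, which by the max-flow--min-cut theorem is a max-flow computation and hence polynomial, and then to handle the connectivity requirement by a separate combinatorial feasibility check. The first step is to reformulate the distance condition combinatorially: for any $F\subseteq E(G)$ one has $\dist_{G-F}(x,y)\ge 3$ if and only if $G-F$ contains neither the edge $xy$ nor any path $x$--$z$--$y$; that is, $F$ must contain $xy$ (when present) and, for every common neighbour $z\in N_G(x)\cap N_G(y)$, at least one of the two edges $xz,zy$. Let $H$ be the auxiliary ``short-connection'' graph on $\{x,y\}\cup\bigl(N_G(x)\cap N_G(y)\bigr)$ whose edge set consists of $xy$ (if it exists) together with $xz$ and $zy$ for every common neighbour $z$. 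Since every edge of $H$ lies on an $x$--$y$ path of length at most $2$ and, conversely, every such path of $G$ is a path of $H$, the condition $\dist_{G-F}(x,y)\ge 3$ holds exactly when $F\cap E(H)$ is an $x$--$y$ edge cut of $H$. A minimum such cut is computed by a single max-flow call (unit capacities for the cardinality version, edge weights for the weighted version of Mahjoub and McCormick).

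Next I would use a structural observation to separate the distance and connectivity requirements. Deleting edges outside $H$ can never decrease the $x$--$y$ distance and can only harm connectivity, so it is enough to consider $F\subseteq E(H)$; moreover every inclusion-minimal $x$--$y$ cut of $H$ deletes $xy$ together with exactly one of $\{xz,zy\}$ per common neighbour, so all minimal cuts have the same size $m_0=|N_G(x)\cap N_G(y)|+[xy\in E(G)]$, which equals the max-flow value. Crucially, if some $F$ with $\dist_{G-F}(x,y)\ge 3$ and $G-F$ connected and $|F|\le k$ exists at all, then $F$ contains a minimal cut $F'\subseteq F$ of size $m_0\le k$, and $G-F'\supseteq G-F$ is then also connected. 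Hence the decision reduces to two polynomial tests: first $m_0\le k$, and second whether \emph{some} minimal cut $F'$ leaves $G-F'$ connected.

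The step I expect to be the main obstacle is precisely this connectivity test, which the bare length-bounded cut ignores. Here the only edges removed are incident to $x$ or $y$, so the components of $G-\{x,y\}$ are untouched; choosing a minimal cut amounts to deciding, for each common neighbour $z$, whether to retain its edge to $x$ or to $y$, thereby attaching the component of $G-\{x,y\}$ containing $z$ to $x$ or to $y$. Connectivity of $G-F'$ is then a polynomial feasibility question about whether these choices, together with the forced edges from non-common neighbours, can tie all components of $G-\{x,y\}$ together while keeping $x$ and $y$ in one component; if no assignment works, the instance is a \textsc{no}-instance regardless of $k$ (for example when $xy$ is a bridge). Reconciling the minimality of the cut with this global connectivity constraint, rather than the distance constraint itself, is the delicate part, and it is exactly the ingredient encoded in the max-flow construction of Mahjoub and McCormick; setting all edge weights to $1$ then yields the \mdi{3} statement.
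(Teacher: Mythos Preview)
The paper itself does not give an independent proof here: it simply records that Mahjoub and McCormick solve a weighted version via a max-flow reduction and observes that taking unit weights yields the statement. So there is little in the paper to compare against; your proposal is already more detailed than what the paper offers.

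Your reduction is correct up to the point you yourself flag: $\dist_{G-F}(x,y)\ge 3$ is indeed equivalent to $F$ cutting all $x,y$-paths of length at most two, every inclusion-minimal such cut has size exactly $m_0=|N_G(x)\cap N_G(y)|+[xy\in E(G)]$, and by monotonicity any feasible $F$ contains a feasible minimal cut of that size. The genuine gap is the last paragraph. You correctly isolate the remaining connectivity test (does \emph{some} choice of one edge per common neighbour leave $G$ connected?), call it a ``polynomial feasibility question,'' and then assert it is ``exactly the ingredient encoded in the max-flow construction of Mahjoub and McCormick.'' That appeal is circular---you are invoking the theorem you set out to prove---and it is also a misreading of your own first paragraph: the max-flow computation you described produces a minimum length-bounded cut, not a connectivity-preserving one, so it does not settle the feasibility question you isolated.

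The step you are missing is in fact elementary and does not need max flow. Look at the connected components of $G-\{x,y\}$: after deleting any minimal cut, each component remains attached to at least one of $x,y$, and $G-F'$ is connected iff some component can be attached to both. A component $C$ admits this precisely when it contains vertices in both $N(x)\setminus N(y)$ and $N(y)\setminus N(x)$, or a vertex in one of these sets together with at least one common neighbour in $C$, or at least two common neighbours (so one can keep $xz_1$ and $z_2y$). Testing this over all components is linear time. With that argument in place your proof becomes complete and self-contained---more so, in fact, than the paper's bare citation.
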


\begin{proof}[Proof of Theorem~\ref{mda3}.]
Let $(G,k)$ be an instance of the \mda{3} problem. Note that $(G,k)$ is a yes instance if and only if $G$ has a pair of distinct vertices $x,y$ and a set $F\subseteq E(G)$ such that $|F|\le k$, $G-F$ is connected and $\dist_{G-F}(x,y) \ge 3$. This last condition is equivalent to the existence of a pair $x,y\in V(G)$ of distinct vertices such that $(G,x,y,k)$ is a yes instance to \mdi{3}, which can be solved in polynomial time by Theorem~\ref{thm:mdi3}.
\end{proof}

\section{Polynomial-time algorithms for the \eda{3} and \meda{3} problems}\label{3meda}

In this section we show how to solve the \eda{3} and \meda{3} problems in polynomial time.

The following weight function on the edges of a graph $G$ will be useful. For each edge $e\in E(G)$, we denote by $w_G(e)$ the minimum length of a cycle in $G$ containing $e$ (or $\infty$ if no such cycle exists). The weight function $w_G$ is well-defined and can be computed in polynomial time; in fact, for every edge $e= uv\in E(G)$, the value of $w_G(e)$ equals the length of a shortest $u,v$-path in $G-e$ (or  $\infty$ if no such path exists).

\begin{lemma}\label{lem:weight}
Let $G$ be a graph of diameter two. Then, for every edge $e$ of $G$, we have $w_G(e)\in \{3,4,5,\infty\}$. Furthermore, the following holds.
\begin{align*}
\diam(G-e)
\begin{cases}
\in \{2,3\} &\text{if } w_G(e)=3,\\
= 3  &\text{if } w_G(e)=4,\\
\geq  4 & \text{if } w_G(e)=5,\\
=\infty &\text{if }w_G(e)=\infty.
\end{cases}
\end{align*}
\end{lemma}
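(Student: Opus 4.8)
The plan is to analyze, for a fixed edge $e = uv$, how distances change when $e$ is deleted, using the fact that $\diam(G) = 2$ constrains everything tightly. Recall $w_G(e)$ is the length of a shortest $u,v$-path in $G-e$ (or $\infty$). First I would dispose of the trivial possibilities: if $e$ is a cut-edge then $G-e$ is disconnected, so $w_G(e) = \infty$ and $\diam(G-e) = \infty$; this handles the $\infty$ case, and from now on I assume $e$ lies on a cycle. Next, since $\diam(G)=2$ and $e=uv$, any two vertices $a,b$ are joined by a path of length $\le 2$ in $G$; if that path avoids $e$ it survives in $G-e$, and if it uses $e$ then $\{a,b\} = \{u,v\}$ or the length-$2$ path is $a$–$u$–$v$ or $u$–$v$–$b$. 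So the only pairs whose distance can grow in $G-e$ are those involving $u$ or $v$. This is the structural observation that drives both claims.

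For the bound $w_G(e) \in \{3,4,5\}$: we have $w_G(e) \ge 3$ since $G$ is simple. For the upper bound, let $N = N_G(u) \cap N_G(v)$ be the common neighbors (vertices forming a triangle with $e$). If $N \neq \emptyset$, then $w_G(e) = 3$. If $N = \emptyset$, pick any $x \in N_G(u)\setminus\{v\}$ and $y \in N_G(v)\setminus\{u\}$ (these exist and are distinct and nonadjacent to the "other" endpoint); since $\diam(G)=2$, $\dist_G(x,y) \le 2$, giving a path $x$–$u$–$v$–$y$ shortcut candidate of length $\le$ something: more carefully, $x$ and $y$ have a common neighbor $z$ (or $xy \in E$), and then $u$–$x$–($z$–)$y$–$v$ is a $u,v$-path in $G-e$ of length $\le 5$, hence $w_G(e) \le 5$. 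The value $4$ arises exactly when no triangle uses $e$ but some length-$4$ cycle does.

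For the case analysis on $\diam(G-e)$, I would argue for each pair $\{a,b\}$ with (say) $a = u$: in $G-e$, a shortest $u$–$b$ path either is a surviving length-$\le 2$ path, or must route "around" via the shortest $u,v$-path in $G-e$, so $\dist_{G-e}(u,b) \le \min\{2,\ \dist_{G-e}(u,b)\text{ in }G-e\text{ using }v\}$, and the latter is at most $w_G(e) - 1 + 1 = w_G(e)$ is too crude — rather, $\dist_{G-e}(u,b) \le w_G(e)$ always (route $u \to v$ in $G-e$, then $v \to b$ in one step if $vb \in E$, but if not, $\dist_G(v,b) \le 2$ and the relevant path may avoid $e$). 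The cleanest route: show $\dist_{G-e}(u,v) = w_G(e)$ and $\dist_{G-e}(a,b) \le \max\{2, \dist_{G-e}(u,v)\} = \max\{2, w_G(e)\}$ for all pairs, by splitting a would-be long path at its first use of $u$ or $v$. This yields $\diam(G-e) \le \max\{2, w_G(e)\}$, giving the upper bounds $3$, $3$, and the finiteness in the respective cases; combined with the trivial lower bound $\diam(G-e) \ge \dist_{G-e}(u,v) = w_G(e)$ in the $w_G(e) \in \{4,5\}$ cases, we get $\diam(G-e) = 3$ when $w_G(e) = 4$ and $\diam(G-e) \ge 4$ when $w_G(e) = 5$; for $w_G(e) = 3$ both $2$ and $3$ are realizable (and $\diam(G-e) \ge \diam(G) \ge 2$... actually one must check $\diam(G-e)$ could be $2$, as deleting $e$ from a complete graph $K_4$ shows, versus $3$ in other examples).

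The main obstacle I anticipate is the bookkeeping in the inequality $\diam(G-e) \le \max\{2, w_G(e)\}$: one must carefully verify that an arbitrary pair's shortest path in $G-e$, if longer than $2$, can be rerouted through a shortest $u,v$-path of $G-e$ without exceeding $w_G(e)$ — this requires noting that a length-$2$ path $a$–$w$–$b$ in $G$ using $e$ must have $e$ incident to $w$, i.e. $w \in \{u,v\}$, so $\{a,b\} \cap \{u,v\} \neq \emptyset$, and then handling the sub-case where both $a$ and $b$ are adjacent (in $G$) to the endpoint $u$ versus split across $u$ and $v$. Once that incidence observation is pinned down, the four cases fall out by direct substitution of $w_G(e) \in \{3,4,5,\infty\}$.
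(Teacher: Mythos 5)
Your high-level strategy (only pairs meeting $\{u,v\}$ can have their distance increased, then a case analysis on $w_G(e)$) matches the paper's, and your argument for $w_G(e)\le 5$ via the neighbours $x\in N(u)$, $y\in N(v)$ and their common neighbour $z$ is a valid alternative to the paper's shortest-cycle contradiction. However, the quantitative core of your plan does not work as written. First, there is an off-by-one you never resolve: $w_G(e)$ is the length of a shortest \emph{cycle} through $e$, so $\dist_{G-e}(u,v)=w_G(e)-1$, not $w_G(e)$. With the lemma's values $3,4,5$ read as cycle lengths, your asserted lower bound ``$\diam(G-e)\ge \dist_{G-e}(u,v)=w_G(e)$'' would give $\diam(G-e)\ge 4$ when $w_G(e)=4$, contradicting the conclusion $\diam(G-e)=3$ you then claim to derive.

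More importantly, your proposed master inequality $\diam(G-e)\le\max\{2,\dist_{G-e}(u,v)\}$ is false: take $G$ to be a triangle $u,v,t$ with a pendant vertex $b$ attached to $v$; then $\diam(G)=2$, $\dist_{G-uv}(u,v)=2$, but $\dist_{G-uv}(u,b)=3$. The correct repair, $\diam(G-e)\le \dist_{G-e}(u,v)+1$, is too weak for the one genuinely nontrivial case $w_G(e)=4$ (it only yields $\diam(G-e)\le 4$). What is needed — and what the paper supplies — is the bound $\diam(G-e)\le\max\{3,\dist_{G-e}(u,v)\}$, obtained \emph{not} by routing $a\to u\to\cdots\to v\to b$ but by routing $u\to z\to\cdots\to b$ through a neighbour $z$ of $u$ with $z\ne v$: since $z,b\notin\{u,v\}$, no shortest $z,b$-path of $G$ (length $\le 2$) can use $e$, so $\dist_{G-e}(u,b)\le 1+\dist_G(z,b)\le 3$. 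You gesture at ``handling the sub-case where both $a$ and $b$ are adjacent to $u$ versus split across $u$ and $v$,'' but the actual rerouting idea is absent, and substituting into your stated inequalities would produce the wrong numbers in both the $w_G(e)=3$ and $w_G(e)=4$ cases. This is the gap you would need to close.
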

\begin{proof}
To see the first statement, suppose that $e$ is an edge with $w_G(e)= k\ge 6$ and let $C=(v_1,\ldots, v_k,v_1)$ with $e = v_1v_2$ be a shortest cycle containing $e$. Since $v_1$ and $v_4$ are non-adjacent and $G$ is of diameter two, $v_1$ and $v_4$ have a common neighbor in $G$, say $z$. Observe that $\{v_1,v_2,v_3,v_4,z\}$ induce a cycle containing $e$ shorter than $C$, a contradiction. Hence, we have $w_G(e)\in \{3,4,5,\infty\}$ for every edge $e$ of $G$.

We prove the second statement. The case when $w_G(e)=3$ is trivial as the distance of any vertex pair increases by at most one by the deletion of $e$.

If $w_G(e)=4$, let $C=(v_1,v_2,v_3,v_4,v_1)$ be a cycle of length four with  $e = v_1v_4$. We claim that $\diam(G-e) = 3$.
Since $w_G(e)>3$, vertices $v_1$ and $v_4$ have no common neighbor in $G$ and hence $\dist_{G-e}(v_1,v_4)= 3$. Consequently $\diam(G-e) \ge 3$.
Suppose for a contradiction that $\diam(G-e) \geq 4$. Then there exists a pair of vertices $x,y\in V(G)$ such that $\dist_{G-e}(x,y)\ge 4> \dist_G(x,y)=2$, and observe that all shortest $x,y$-paths in $G$ must use the deleted edge $v_1v_4$. Since $x$ and $y$ are at distance two in $G$, this is only possible if $|\{x,y\}\cap \{v_1,v_4\}| = 1$, say $x = v_1$ and $y\not\in \{v_1,v_4\}$. Since $\{v_2,y\}\cap \{v_1,v_4\}=\emptyset$, we infer that no shortest $v_2,y$-path in $G$
(which are of length at most two) uses the edge $v_1v_4$, which implies $\dist_{G-e}(v_2,y)= \dist_{G}(v_2,y)$. But then $\dist_{G-e}(x,y)\le \dist_{G-e}(x,v_2)+ \dist_{G-e}(v_2,y) = 1 + \dist_G(v_2,y)\le 3$, a contradiction.

If $w_G(e)=5$ for $e=uv$, then we know that $\dist_{G-e}(u,v) = 4$. Consequently, $\diam(G-e)\ge 4$.
Lastly, an edge with $w_G(e)=\infty$ is a cut-edge, and thus $G-e$ is disconnected. By definition, $\diam(G-e)=\infty$ in this case.
\end{proof}

\begin{lemma}\label{lem:meda3central}
Let $G$ be a graph with at least four vertices having diameter two and girth at most four. Then there exists a set $D\subseteq E(G)$ such that $\diam(G-D)=3$. Furthermore, such a set $D$ of minimum size can be found in polynomial time.
\end{lemma}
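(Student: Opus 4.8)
The plan is to exploit Lemma~\ref{lem:weight}, which tells us that for a diameter-two graph $G$, deleting an edge $e$ with $w_G(e)=4$ always yields $\diam(G-e)=3$, while deleting an edge with $w_G(e)=3$ keeps the diameter in $\{2,3\}$, and deleting an edge with $w_G(e)\in\{5,\infty\}$ is forbidden (it would push the diameter to $\ge 4$ or disconnect the graph). Since $G$ has girth at most four, there is at least one edge $e_0$ lying on a triangle or on a $4$-cycle, i.e.\ with $w_G(e_0)\in\{3,4\}$. If $w_G(e_0)=4$, then $D=\{e_0\}$ already works and has minimum size $1$. So the interesting case is when every edge on a short cycle has weight $3$, i.e.\ the girth is exactly three; we then need to delete edges one or more at a time so as to reach diameter exactly $3$ without ever overshooting or disconnecting.

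The first step is to reduce the search to a ``safe'' subfamily of edge sets. Call an edge $e$ \emph{deletable} if $w_G(e)\in\{3,4\}$; these are exactly the edges whose individual deletion keeps $G$ connected with diameter in $\{2,3\}$. I would first argue that any minimum $D$ with $\diam(G-D)=3$ can be taken to consist only of deletable edges of the \emph{current} graph at each deletion step — formally, there is an ordering $e_1,\dots,e_r$ of $D$ such that for each $i$, $G-\{e_1,\dots,e_{i-1}\}$ still has diameter two until the last step, where it jumps to $3$. The reason: diameter is monotone nondecreasing under edge deletion, so along any shortest route to a diameter-$3$ graph the diameter stays at $2$ and then moves to $3$ exactly once; at the moment it is still $2$, Lemma~\ref{lem:weight} applies and forbids deleting any edge of weight $5$ or $\infty$, and a weight-$4$ edge at that moment would finish the job in one more step. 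Hence WLOG $e_1,\dots,e_{r-1}$ have weight $3$ in their respective residual graphs and $e_r$ has weight $4$ there (or $r=1$ and $e_1$ has weight $4$). This structural normalisation is the conceptual heart of the argument.

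Next I would turn this into an efficient algorithm. The guess to make is the ``trigger'' edge $e_r=uv$ together with the $4$-cycle $u,a,b,v,u$ witnessing $w_{G-\{e_1,\dots,e_{r-1}\}}(e_r)=4$ in the final residual graph: there are only $O(|V|^4)$ such quadruples. Having fixed $e_r=uv$ and the two length-$2$ connections through $a$ and through $b$, the task becomes: delete a minimum set of weight-$3$ edges so that (i) every common neighbour of $u$ and $v$ is ``destroyed'' as a common neighbour (some edge of the path $u\!-\!w\!-\!v$ is deleted) — so that after also removing $e_r$ we get $\dist(u,v)=3$ — while (ii) keeping diameter two throughout the prefix and keeping the graph connected, and (iii) not creating any other vertex pair at distance $\ge 4$ once $e_r$ is gone. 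Condition (i) alone is a simple covering problem (a vertex cover / matching-type selection over the common neighbours of $u,v$), and I expect conditions (ii)–(iii) to be \emph{automatically} satisfied: deleting weight-$3$ edges one at a time never raises the diameter of a diameter-two graph (first line of Lemma~\ref{lem:weight}), and the final deletion of $e_r$ raises it to exactly $3$ precisely because its weight is $4$ at that point — which is exactly what the chosen $4$-cycle $u,a,b,v$ guarantees, provided the covering solution is chosen to avoid the four edges $ua,ab,bv$. So the algorithm is: for each candidate quadruple $(u,v,a,b)$, solve the minimum edge cover of the common neighbours of $u$ and $v$ using edges other than $ua,ab,bv$ (adding $1$ for $e_r=uv$ itself), keep the best, and also separately test whether some single edge already has weight $4$.

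The main obstacle I anticipate is proving the claim that the prefix deletions cause no collateral damage — i.e.\ that after deleting the cover edges \emph{and} $e_r$, no pair other than $(u,v)$ jumps to distance $\ge 4$, and the graph stays connected. Monotonicity and Lemma~\ref{lem:weight} handle the prefix (diameter stays $2$); the delicate point is the very last deletion of $e_r$ in the residual graph $G'$, where one must verify $\diam(G'-e_r)=3$ and not more. This is where the second part of Lemma~\ref{lem:weight} is invoked with $e=e_r$: since $w_{G'}(e_r)=4$ by construction (the $4$-cycle $u,a,b,v$ survives because we forbade deleting its edges, and all common $u,v$-neighbours were destroyed so no shorter cycle through $e_r$ remains), the lemma gives $\diam(G'-e_r)=3$ exactly. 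One must also check $G'-e_r$ is connected, which follows since $e_r$ lies on the surviving $4$-cycle and is therefore not a cut-edge. Packaging these verifications, and checking that the covering subproblem is genuinely polynomial (it is a minimum vertex-cover-type problem on the bipartite-like incidence structure of the length-$2$ $u$–$v$ connectors, solvable by matching), completes the proof and yields the claimed polynomial running time.
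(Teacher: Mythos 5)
Your high-level plan (reduce to the case where no single deletion works, then guess the structure realizing diameter three and destroy all short connections between a candidate diametral pair) is in the same spirit as the paper's proof, but two of your key steps fail. First, the normalisation ``WLOG the last deleted edge $e_r$ has weight $4$ in the residual graph, witnessed by a $4$-cycle through $e_r$'' is unjustified and false in general. Lemma~\ref{lem:weight} allows a weight-$3$ edge to raise the diameter from $2$ to $3$: if $x$ and $y$ are non-adjacent, at distance $2$, and $w$ is their last surviving common neighbour, then deleting $xw$ (which may lie on a triangle, hence have weight $3$) sends $\dist(x,y)$ to $3$ while $\dist(x,w)$ stays at $2$. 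In such a solution the diametral pair of $G-D$ is a \emph{non-edge} of $G$, so there is no trigger edge $uv$ and no $4$-cycle $(u,a,b,v)$ for your enumeration to find; your algorithm searches only over adjacent candidate diametral pairs and can therefore miss the optimum, or indeed all solutions. This is why the paper enumerates all four-vertex paths $(a,b,c,d)$ and treats $a,d$ --- adjacent or not --- as the candidate diametral pair, deleting the chord $ad$ only when it happens to be an edge.

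Second, the step you yourself flag as ``the main obstacle'' is precisely the hard part, and your justification rests on a misreading of Lemma~\ref{lem:weight}: it states $\diam(G-e)\in\{2,3\}$ when $w_G(e)=3$, not that the diameter is preserved. Hence deleting one edge per common neighbour of the candidate pair can prematurely push some \emph{other} pair to distance $3$, after which further deletions may overshoot to $4$ or disconnect the graph; moreover an edge of weight $3$ in $G$ may acquire weight $5$ or $\infty$ in a residual graph. One must show that for each common neighbour $w$ one can \emph{choose} which of $uw,wv$ to delete so that every deleted edge has weight $3$ at the moment of its deletion. This needs a genuine argument --- the paper's Claim~\ref{claim:paths} builds $D_0\subseteq\cdots\subseteq D_k$ greedily and proves at each step that some untouched common neighbour admits a weight-$3$ choice, using the structure of the common neighbourhood --- and it is not a matching or vertex-cover computation, since the count is forced (one edge per common neighbour) and only the selection is in question. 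Without these two pieces your proof does not go through.
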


\begin{proof}
If there exists an edge $e$ whose deletion results in a graph of diameter three, then $\{e\}$ is an optimal solution.
We can thus make the following.

\begin{assumption}\label{ass:edges-0}
For every edge $e\in E(G)$, we have $\diam(G-e)\neq 3$.
\end{assumption}

Assumption~\ref{ass:edges-0} and \Cref{lem:weight} imply the following.

\begin{claim}\label{ass:edges}
The girth of $G$ is three and for every edge $e\in E(G)$ such that $w_G(e) = 3$, the diameter of $G-e$ is two.
\end{claim}

Furthermore, Assumption~\ref{ass:edges-0} has strong implications on the structure of $G$.

\begin{claim}\label{claim:2conn}
$G$ is biconnected.
\end{claim}

\begin{proof}[Proof of Claim~\ref{claim:2conn}]
Suppose that $v$ is a cut-vertex and observe that $v$ is adjacent with every other vertex of $G$.
Since the girth of $G$ is three, there exists an edge $xy$ such that $v\notin \{x,y\}$. Then $G-vx$ has a vertex pair at distance three, namely $x$ and an arbitrary vertex from a connected component of $G-v$ that does not contain $x$, a contradiction with Assumption~\ref{ass:edges-0}.
\end{proof}

\begin{claim}\label{claim:weight-3}
For every edge $e$ of $G$, we have $w_G(e)= 3$.
\end{claim}

\begin{proof}[Proof of Claim~\ref{claim:weight-3}]
By Claim~\ref{claim:2conn}, all edge weights are finite. By
Assumption~\ref{ass:edges-0} and \Cref{lem:weight}, all edge weights are either $3$ or $5$.
Since the graph has girth three, there is an edge with weight $3$.
Suppose for a contradiction that there exists an edge with weight $5$.
Then, since $G$ is connected, there exists a pair of edges with different weights sharing a vertex, say
$e = uv$ and $f = vz$ where $w_G(e) = 3$ and $w_G(f) = 5$.
The definition of weight function $w$ implies that $\dist_G(u,z) = 2$ and $u-v-z$ is the unique shortest $u,z$-path in $G$.
It follows that the distance in $G-uv$ between $u$ and $z$ is at least three. It is also at most three, since
the edge $uv$ is contained in a triangle in $G$. It follows that $G-uv$ has diameter three,
contradicting Assumption~\ref{ass:edges-0}.
\end{proof}

Claims~\ref{ass:edges} and~\ref{claim:weight-3} imply the following.

\begin{claim}\label{claim:diam-deletion}
For every edge $e$ of $G$, we have $\diam(G-e) = 2$.
\end{claim}

A four-vertex path $P = (a,b,c,d)$ in $G$ is said to be {\it relevant} if (i) the diameter of the graph $G^P = G-(E(G)\cap \{ac,ad,bd\})$ is at most three, and (ii) $G[\{a,b,c,d\}]$ is not an induced cycle with $|N(a)\cap N(d)| = 1$.

\begin{claim}\label{claim:paths}
Let $P = (a,b,c,d)$ be a relevant path in $G$, let $N_G(a) \cap N_G(d) = \{v_1,\ldots, v_k\}$ for some $k\in \mathbb{Z}_{\ge 0}$, and let
$F = \cup_{1\leq i\leq k}\{v_ia,v_id\}$.
Then, there exists an edge set $D\subseteq F\cup (E(G)\cap \{ac,ad,bd\})$ with $|D\cap \{v_ia,v_id\}|\leq 1$ for each $i\in \{1,\ldots, k \}$ such that $G-D$ has diameter three. Furthermore, such an edge set can be computed in polynomial time.
\end{claim}

\begin{proof}[Proof of Claim~\ref{claim:paths}]
If $\diam(G^P)=3$, then we are done (by taking $D = E(G)\cap \{ac,ad,bd\}$). So we assume that $\diam(G^P)=2$.
Note that this implies that $k\ge 1$.
Consider the case when $k=1$. If $\min \{w_{G^P}(v_1a), w_{G^P}(v_1b)\} \leq 4$, then we choose $e_1$ as an edge of minimum weight among $\{v_1a,v_1d\}$. Then $\dist_{G^P-e_1}(a,d)=3$ and it follows that $\diam(G^P-e_1)=3$ by~\Cref{lem:weight}.

\begin{sloppypar}
Suppose $\min \{w_{G^P}(v_1a), w_{G^P}(v_1d)\} \geq 5$,
implying $w_{G^P}(v_1a)= w_{G^P}(v_1d)=5$, since $(a,b,c,d,v_1,a)$ is a cycle of length $5$ in $G^P$ containing the edges $v_1a$ and $v_1d$.
Because of condition (ii) of a relevant path, we may assume that $ac$ is an edge of $G$.
Recall that by~\Cref{claim:weight-3}, vertices $v_1$ and $a$ have a common neighbor in $G$. However,
the fact that $w_{G^P}(v_1a)= w_{G^P}(v_1d)=5$ implies that $v_1b,v_1c \notin E(G)$, and thus we must have $ad\in E(G)$.
This implies that $w_{G-ad}(av_1) = 4$. By Claim~\ref{claim:diam-deletion}, the graph $G-ad$ has diameter two
and therefore by~\Cref{lem:weight} the graph $G-\{ad,av_1\}$ has diameter three. It follows that $\{ad,av_1\}$ is a desired set.

Now we consider the case $k\geq 2$. Notice that $w_{G^P}(e)\leq 4$ for every edge $e$ of the form $v_ia$ or $v_id$. If there exists an edge $f \in F$ with $w_{G^P}(f)=4$, then $D=\{f\}\cup (E(G)\cap \{ac,ad,bd\}$ is a desired edge set.
Therefore, we may assume that $w_{G^P}(e)=3$ for every edge $e \in F$.

Consider an arbitrary edge set $D\subseteq F$ such that $|D\cap \{v_ia,v_id\}|\leq 1$ for each $i$
and \hbox{$\diam(G^P-D)=2$} (for example, $D=\emptyset$). Obviously, there exists a vertex $v_j$ such that $D$ contains neither $v_ja$ nor $v_jd$.
We claim that there exists an edge $e_j\in \{v_ja,v_jd\}$ such that $w_{G^P-D}(e_j)=3$.
Recall that $v_j$ and $a$ must have a common neighbor in $G^P$ by the above assumption.
If $v_j$ and $a$ have a common neighbor in $V(G)\setminus  \{v_1,\ldots, v_k\}$ in the graph $G^P$, then clearly $w_{G^P-D}(v_ja)=3$.
Otherwise 
$v_j$ has at least one neighbor $v_{\ell}$ in $G^P$.
From $|D\cap \{v_{\ell}a,v_{\ell}d\}|\leq 1$, at least one of $\{a,v_j,v_{\ell}\}$ and $\{d,v_j,v_{\ell}\}$ forms a triangle in $G^P-D$,
thus ensuring the existence of $e_j\in \{v_ja,v_jd\}$ such that $w_{G^P-D}(e_j)=3$.
\end{sloppypar}

We now define a sequence of sets $D_0,\ldots, D_k$
such that for all $j\in \{0,1,\ldots, k\}$ we have $D_j\subseteq F$, $|D_j\cap \{v_ia,v_id\}|\leq 1$ for each $i$,
\hbox{$\diam(G^P-D_j)=2$} for all $j<k$, and \hbox{$\diam(G^P-D_k)=3$}.
We set $D_0 = \emptyset$ and for each $j=1,\ldots, k$, we set $D_j = D_{j-1}\cup\{e_j\}$ where $v_i$ is a vertex such that $D_{j-1}$ contains neither $v_ia$ nor $v_id$ and $e_j$ is an edge in $\{v_ia,v_id\}$ such that $w_{G^P-D_{j-1}}(e_j)=3$. Note that such an edge exists by the claim proved in the previous paragraph.
Clearly, the final set $D_k$ can be computed in polynomial time and satisfies $|D_k\cap \{v_ia,v_id\}|= 1$ for each $i$ and $\diam(G^P-D_k)\ge \dist_{G^P-D_k}(a,d)= 3$. Thus, $D_k\cup (E(G)\cap \{ac,ad,bd\})$ is a desired set. This proves Claim~\ref{claim:paths}.
\end{proof}

For a relevant path $Q=(a,b,c,d)$, let us denote by $f(Q)$ the quantity given by the following expression:
 $$f(Q) = |E(G)\cap\{ad,bc,bd\}|+|N_G(a)\cap N_G(d)|\,.$$
By Claim~\ref{claim:paths}, for every relevant path $Q=(a,b,c,d)$ one can compute an edge set $D$ of size at most $f(Q)$ such that $G-D$ has diameter three.
Therefore, to see the existence of an edge set whose deletion results in a graph with diameter three, it suffices to show that there exists a relevant path $Q$.

\begin{claim}\label{claim:relevant}
$G$ has a relevant path.
\end{claim}

\begin{proof}
Let $P = (a,b,c,d)$ be an arbitrary four-vertex path in $G$ and suppose that $P$ is not relevant. Since every induced $P_4$ is relevant, $P$ cannot be induced, that is, $E(G)\cap \{ac,ad,bd\}\neq \emptyset$, or, equivalently, $G^P\neq G$.

Suppose first that $E(G)\cap \{ac,ad,bd\} = \{ad\}$. In this case, $G[\{a,b,c,d\}]$ is an induced cycle, $w_G(ad)\in \{3,4\}$, and \Cref{lem:weight} implies that $\diam(G^P) = \diam(G-ad) \le 3$. Since $P$ is not relevant, we have $|N(a)\cap N(d)| = 1$. Let $v$ be the unique common neighbor of $a$ and $d$, and consider the two four-vertex paths $Q = (v,a,b,c)$ and $R = (v,d,c,b)$. If one of $R$ and $Q$ is an induced $P_4$, then it is a relevant path. So we may assume that $bv\in E(G)$ or $cv\in E(G)$. By symmetry, we may assume that $bv\in E(G)$. We will show that in this case $Q$ is a relevant path. Since $G[\{v,a,b,c\}]$ is not an induced $C_4$, it suffices to show that
 $\diam(G^Q)\le 3$. Note first that by Claim~\ref{claim:diam-deletion}, the graph $G-bv$ has diameter two.
If $cv\not\in E(G)$, then $G^Q = G-bv$ and we are done. Suppose that $cv\in E(G)$. Then $G^Q = G-\{bv,cv\}$. Since $d$ is a common neighbor in $G-bv$ of $c$ and $v$, we have $w_{G-bv}(cv) = 3$ and we infer using \Cref{lem:weight} that $\diam(G^Q) = \diam(G-\{bc,cv\})\le 3$. Thus, in both cases path $Q$ is relevant.

The above analysis implies that we may assume that $G$ is a $\{P_4,C_4\}$-free graph. Since $G$ is connected, this implies that $G$ has a universal vertex, see~\cite{Wolk}. Let $U$ be the set of all universal vertices in $G$. Since $G$ is not a complete graph, $U\neq V(G)$. Moreover, since the graph $G-U$ is $\{P_4,C_4\}$-free and without universal vertices, it has at least two connected components. Consequently $|U|\ge 2$ by \Cref{claim:2conn}.
Consider now the path $Q = (a,b,c,d)$ where $a$ and $d$ are vertices from different connected components of $G-U$ and $b,c\in U$.
Since $b$ and $c$ are universal vertices in $G$, we have that $ac,bd\in E(G)$.  Claim~\ref{claim:diam-deletion} implies that $\diam(G-ac) = 2$.
Moreover, $w_{G-ac}(bd) = 3$ and \Cref{lem:weight} implies that $\diam(G-\{ac,bd\}) \le 3$.
Thus, $\diam(G^Q) \le 3$ and hence $Q$ is a relevant path in $G$.
This completes the proof.
\end{proof}

By Claims~\ref{claim:paths} and~\ref{claim:relevant}, we conclude that there exists an edge set $D$ such that $\diam(G-D)=3$.
On the other hand, for every relevant path $Q$, any solution $D$ which makes  $Q$  a diametral path contains at least $f(Q)$ edges.
Furthermore, any path $P^*$ achieving diameter three after removing an optimal solution $D^*$ is relevant, and an output solution of size at most $f(P^*)$ provided by Claim~\ref{claim:paths} meets the lower bound and
thus is an optimal solution (even though it need not be necessarily identical with $D^*$). These observations bring forth the following algorithm: we iterate over all four-vertex paths $Q = (a,b,c,d)$ in $G$ and compute a solution of size at most $f(Q)$ whenever $Q$ is relevant.
By keeping track of a thus-far best solution, we can compute an optimal solution. Clearly, the algorithm runs in polynomial time.
\end{proof}

\begin{theorem}\label{thm:spanning-subgraph-diameter-3}
Let $G$ be a graph.
Then, $G$ has a spanning subgraph of diameter three if and only if one of the following conditions holds.
\begin{enumerate}[(i)]
  \item\label{item-complete} $G$ has diameter one (that is, $G$ is a complete graph) and at least four vertices.
  \item\label{item-diameter-two} $G$ has diameter two and girth at most four.
  \item\label{item-diameter-three} $G$ has diameter three.
\end{enumerate}
\end{theorem}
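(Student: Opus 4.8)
The plan is to prove the two implications separately, organizing everything by the value of $\diam(G)$. The first observation, used in both directions, is that $\diam(G)\le 3$: a spanning subgraph $H$ of diameter three is necessarily connected, and deleting edges from a connected graph never decreases the diameter, so $\diam(G)\le\diam(H)=3$; conversely, each of (i)--(iii) forces $\diam(G)\le 3$ directly.

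For the ``if'' direction I would argue case by case. If (iii) holds, take $F=\emptyset$. If (i) holds, say $G=K_n$ with $n\ge 4$, I would exhibit an explicit spanning subgraph of diameter three: take a path $v_1v_2v_3v_4$ and join every remaining vertex to $v_2$ only; a one-line distance check gives diameter exactly three. (Alternatively one may quote the remark preceding Theorem~\ref{thm:complete}, which says $K_n$ is a yes instance of \eda{d} iff $n\ge d+1$.) If (ii) holds, I would first note that a connected graph of diameter two and girth at most four has at least four vertices — among the connected graphs on at most three vertices only $P_3$ has diameter two, and its girth is infinite — so Lemma~\ref{lem:meda3central} applies and produces a set $D$ with $\diam(G-D)=3$, i.e.\ a spanning subgraph of diameter three.

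For the ``only if'' direction, suppose $G-F$ is a connected spanning subgraph of diameter three. If $\diam(G)=3$ we are in case (iii); if $\diam(G)\le 1$ then $G$ is complete, and since an $n$-vertex graph has diameter at most $n-1$ we must have $n\ge 4$, so we are in case (i). The remaining, and only substantive, case is $\diam(G)=2$, where I must show $g(G)\le 4$. Suppose for contradiction that $g(G)\ge 5$. Write $F=\{e_1,\dots,e_m\}$ and put $G_i=G-\{e_1,\dots,e_i\}$; each $G_i$ is connected, being a spanning supergraph of the connected graph $G_m=G-F$. Since $\diam(G_0)=2<3\le\diam(G_m)$, there is a least index $i\ge 1$ with $\diam(G_i)\ge 3$, so that $\diam(G_{i-1})=2$ and $G_i=G_{i-1}-e_i$. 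Now $G_{i-1}$ has diameter two and, as a subgraph of $G$, girth at least five; hence by Lemma~\ref{lem:weight} we have $w_{G_{i-1}}(e_i)\in\{3,4,5,\infty\}$, and girth at least five rules out the values $3$ and $4$. Since $G_i=G_{i-1}-e_i$ is connected, $e_i$ is not a cut-edge, so $w_{G_{i-1}}(e_i)\ne\infty$; therefore $w_{G_{i-1}}(e_i)=5$, and Lemma~\ref{lem:weight} gives $\diam(G_i)\ge 4$. But $G_m$ is obtained from $G_i$ by further edge deletions while staying connected, so $\diam(G-F)=\diam(G_m)\ge\diam(G_i)\ge 4$, contradicting $\diam(G-F)=3$. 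Thus $g(G)\le 4$ and condition (ii) holds.

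The only delicate point is this last paragraph, and even there the heavy lifting is already carried out by Lemma~\ref{lem:weight}, which pins down exactly how the diameter of a diameter-two graph behaves under a single edge deletion. The main obstacle is simply to package this correctly: to observe that the diameter of a connected spanning subgraph can only increase as further edges are deleted, to pinpoint the first deletion at which it leaves the value $2$, and to apply Lemma~\ref{lem:weight} at that step (using that a subgraph of a girth-$\ge 5$ graph again has girth $\ge 5$). I do not expect any additional case analysis or further lemmas to be needed.
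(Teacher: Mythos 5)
Your proof is correct and follows essentially the same route as the paper's: the forward direction reduces the diameter-two case to Lemma~\ref{lem:weight}, and the backward direction handles condition (ii) via Lemma~\ref{lem:meda3central}. Your edge-by-edge deletion argument in the girth-$\geq 5$ case actually spells out a step the paper leaves implicit (Lemma~\ref{lem:weight} only governs a single edge deletion), and your explicit diameter-three spanning tree of $K_n$ replaces the paper's reduction of case (i) to case (ii); both are harmless variations.
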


\begin{proof}
Suppose first that $G$ has a spanning subgraph $G'$ such that $\diam(G') = 3$. Since $\diam(G')\ge \diam(G)$, we infer that $G$ has diameter at most three. Furthermore, since $G'$ contains a pair of vertices at distance three, $G'$ (and therefore $G$) has at least four vertices.
If $G$ has diameter one or three, then one of conditions~\eqref{item-complete} and~\eqref{item-diameter-three} holds.

Let us assume now that $\diam(G) = 2$. It remains to show that $G$ has girth at most four.
Suppose that the girth of $G$ is at least five. By~\Cref{lem:weight}, we have $w_G(e)\in \{5,\infty\}$ for every edge $e$ of $G$. Then~\Cref{lem:weight} implies that $G$ does not contain any spanning subgraph of diameter three, a contradiction.
This establishes the forward implication.

\medskip
We now prove that the conditions are also sufficient.
Let $G$ be a graph satisfying one of conditions~\eqref{item-complete}--\eqref{item-diameter-three}. We will show that in each case, $G$ contains a spanning subgraph $G'$ with diameter three.
If condition~\eqref{item-complete} holds, observe that for every edge $e\in E(G)$, the graph $G-e$ has diameter two and girth three, which reduces to the case of~\eqref{item-diameter-two}. If condition~\eqref{item-diameter-two} holds, that is, $G$ has diameter two and girth at most four, then a desired subgraph $G'$ exists by~\Cref{lem:meda3central}.
If condition~\eqref{item-diameter-three} holds, then we can take $G' = G$.
This completes the proof.
\end{proof}

Theorem~\ref{thm:spanning-subgraph-diameter-3} implies the following.

\begin{corollary}
The \eda{3} problem is solvable in polynomial time.
\end{corollary}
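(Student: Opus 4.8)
The plan is to reduce the \eda{3} problem directly to the characterization of Theorem~\ref{thm:spanning-subgraph-diameter-3}. First I would note that for any connected graph $G$ and any $F\subseteq E(G)$, the graph $G-F$ has vertex set $V(G)$, so $G-F$ is precisely a spanning subgraph of $G$; moreover, if $G-F$ has diameter three then it is automatically connected. Hence a connected graph $G$ is a yes-instance to \eda{3} if and only if $G$ has a spanning subgraph of diameter (exactly) three. This is exactly the property characterized by Theorem~\ref{thm:spanning-subgraph-diameter-3}.

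Next I would invoke that theorem: $G$ is a yes-instance if and only if (i) $G$ is complete on at least four vertices, or (ii) $G$ has diameter two and girth at most four, or (iii) $G$ has diameter three. The algorithm then simply tests each of these three conditions and answers ``yes'' precisely when at least one of them holds.

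The final step is to observe that each condition can be checked in polynomial time. Completeness and the vertex count are immediate from $|E(G)|$ and $|V(G)|$. The diameter of $G$ can be computed by running breadth-first search from every vertex, and the girth of $G$ can be computed by the standard shortest-cycle algorithm (or, as already remarked in the paper, via the edge-weight function $w_G$, whose values can be computed in polynomial time). Since the number of conditions is constant and each test runs in polynomial time, the whole procedure is polynomial, which proves the corollary.

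There is no real obstacle here: all of the substantive work has been done in Lemma~\ref{lem:weight}, Lemma~\ref{lem:meda3central}, and Theorem~\ref{thm:spanning-subgraph-diameter-3}. The only points requiring a line of justification are the equivalence between ``$G-F$ has diameter three'' and ``$G$ has a spanning subgraph of diameter three'' (with connectivity of $G-F$ subsumed by finiteness of its diameter), and the routine remark that diameter and girth are polynomial-time computable parameters.
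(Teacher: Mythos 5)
Your proposal is correct and follows exactly the route the paper intends: the corollary is stated as an immediate consequence of Theorem~\ref{thm:spanning-subgraph-diameter-3}, and your write-up merely makes explicit the (correct) observations that $G-F$ ranges over all spanning subgraphs, that finite diameter subsumes connectivity, and that the three conditions of the theorem are each polynomial-time testable. Nothing is missing.
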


In case of a yes instance, a smallest set of edges whose deletion results in a graph of diameter $3$ can also be computed
efficiently.

\begin{theorem}\label{thm:meda3poly}
The \meda{3} problem is solvable in polynomial time.
\end{theorem}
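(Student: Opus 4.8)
The plan is to reduce the \meda{3} problem to a small number of polynomially solvable subproblems, according to the diameter of the input graph~$G$. First I would dispose of the trivial cases using the preliminary observations: if $\diam(G)>3$ then $(G,k)$ is a no instance, and if $\diam(G)=3$ then $(G,k)$ is a yes instance for every $k\ge 0$ (take $F=\emptyset$). This leaves the cases $\diam(G)\in\{1,2\}$, and by Theorem~\ref{thm:spanning-subgraph-diameter-3} we may further assume that $G$ satisfies one of conditions~\eqref{item-complete} or~\eqref{item-diameter-two} of that theorem; otherwise no spanning subgraph of diameter three exists and $(G,k)$ is again a no instance. The remaining work is to compute, in polynomial time, the minimum size of an edge set $D$ with $\diam(G-D)=3$, and then compare it with~$k$.

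For the case $\diam(G)=1$, that is, $G=K_n$ with $n\ge 4$, this is exactly the extremal quantity handled by Theorem~\ref{thm:complete} (Ore): the minimum number of deletions equals $\binom{n}{2}-3-\tfrac12(n-4)(n+3)$, computable in linear time. (Alternatively one can delete a single edge to land in case~\eqref{item-diameter-two} with girth three and at least four vertices, and invoke the next case — but the Ore formula is cleaner since for $K_n$ a single edge deletion may not already reach diameter three, e.g.\ when $n=4$.) Wait — for $K_4$ deleting one edge gives diameter two, so one genuinely needs the $\diam(G)=2$ machinery or the Ore count; I would simply cite Theorem~\ref{thm:complete}.

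For the case $\diam(G)=2$ with girth at most four, Lemma~\ref{lem:meda3central} already does essentially all the work: it asserts that a minimum-size edge set $D$ with $\diam(G-D)=3$ can be found in polynomial time (when $G$ has at least four vertices, which is guaranteed since a diameter-two graph on at most three vertices is complete). So here I would just invoke Lemma~\ref{lem:meda3central}, obtain the optimal $D$, and answer yes iff $|D|\le k$. Putting the three cases together gives a polynomial-time decision procedure, and in fact produces a witnessing edge set of minimum size in the yes case.

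The only genuinely substantive ingredient is Lemma~\ref{lem:meda3central}, whose proof (via relevant four-vertex paths, the weight function $w_G$, and the case analysis of Claims~\ref{claim:paths}–\ref{claim:relevant}) has already been carried out in the excerpt; so the main obstacle for Theorem~\ref{thm:meda3poly} itself is merely the careful bookkeeping of which instances fall into which case and verifying that the "at least four vertices" hypothesis of Lemma~\ref{lem:meda3central} is always met. There is no deeper difficulty: once Theorems~\ref{thm:complete} and~\ref{thm:spanning-subgraph-diameter-3} and Lemma~\ref{lem:meda3central} are in hand, the theorem follows by a short case distinction.
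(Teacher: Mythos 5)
Your proposal is correct and follows essentially the same route as the paper: dispose of the trivial cases according to $\diam(G)$, rule out girth at least five via Theorem~\ref{thm:spanning-subgraph-diameter-3}, and invoke Lemma~\ref{lem:meda3central} for the main case of a diameter-two graph with girth at most four. The only (immaterial) difference is in the complete-graph subcase, where you apply Ore's formula from Theorem~\ref{thm:complete} directly, while the paper deletes one arbitrary edge and recurses into the diameter-two case; both are valid.
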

\begin{proof}
If $G$ has diameter three, the solution is trivial (delete nothing). If $G$ has at most three vertices or has diameter at least four, then it is a trivial no instance. We may also assume that $G$ is not complete. Indeed, if $G$ is complete, then we can delete any edge to transform $G$ into an equivalent graph $G'$ of diameter two and girth three. Solving the resulting instance $G'$ optimally will yield an optimal solution for $G$.
Clearly, whether $G$ satisfies one of the above conditions (and if so, which one) can be checked in polynomial time.
Therefore, we may assume that $G$ has at least four vertices and diameter two.
Moreover, we may assume that $G$ has girth at most four, since otherwise $G$ is trivially a no instance  by \Cref{lem:weight}.
Now, applying~\Cref{lem:meda3central} we can compute an optimal solution.
\end{proof}

The result of Theorem~\ref{thm:spanning-subgraph-diameter-3} connects our study to a well-known concept in extremal and algebraic graph theory, the Moore graphs. A connected graph with diameter $d$ is either acyclic or has girth at most $2d+1$.
A connected graph $G$ with diameter $d$ and girth exactly $2d+1$ is said to be a \emph{Moore graph} (see~\cite{S68}).
A Moore graph is necessarily regular (that is, all vertices have the same degree), and, apart from odd cycles and complete graphs, all Moore graphs have diameter two. Furthermore, it is known that every Moore graph $G$ with diameter two is $k$-regular with $|V(G)| = k^2+1$ for some $k\in \{2,3,7,57\}$. For each $k \in \{2,3,7\}$, there is a unique $k$-regular Moore graph with diameter two, namely the $5$-cycle (for $k = 2$), the ($10$-vertex) Petersen graph (for $k = 3$), and the
($50$-vertex) Hoffman-Singleton graph (for $k = 7$); see, e.g.,~\cite{BCN89}. The existence of a ($3250$-vertex) $57$-regular Moore graph is a famous open problem in graph theory (see~\cite{D19} for a recent survey).

Theorem~\ref{thm:spanning-subgraph-diameter-3} implies the following characterization of
graphs with diameter two that are yes / no instances to the \eda{3} problem.

\begin{corollary}
For every graph $G$ with diameter two, the following holds.
\begin{enumerate}
  \item If $G$ has girth at most four, then $G$ has a spanning subgraph of diameter three.
  \item If $G$ has girth at least five, then $G$ is either a Moore graph or $G$ is acyclic (in which case $G$ is isomorphic to a star $K_{1,n}$ for some $n\ge 2$); in either case, $G$ does not have any spanning subgraph of diameter three.
\end{enumerate}
\end{corollary}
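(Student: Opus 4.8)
The plan is to obtain both parts as immediate consequences of Theorem~\ref{thm:spanning-subgraph-diameter-3}, together with the elementary fact, recorded just before the corollary, that a connected graph with diameter $d$ is either acyclic or has girth at most $2d+1$.

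For part~1, a graph $G$ with diameter two and girth at most four is exactly an instance satisfying condition~\eqref{item-diameter-two} of Theorem~\ref{thm:spanning-subgraph-diameter-3}; hence $G$ has a spanning subgraph of diameter three, and there is nothing more to prove.

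For part~2, suppose $G$ has diameter two and girth at least five. First I would pin down the structure of $G$. If $G$ is acyclic, then it is a tree; a tree of diameter two has a single internal vertex, since otherwise the path joining two internal vertices, extended by a leaf-edge at each end, would have length at least three. Thus $G\cong K_{1,n}$, and $n\ge 2$ because $\diam(K_{1,1})=1$. If instead $G$ contains a cycle, then by the quoted fact its girth is at most $2\cdot 2+1=5$, and combined with the hypothesis this forces the girth to be exactly $5=2d+1$; by definition $G$ is then a Moore graph with diameter two. This establishes the dichotomy. To see that $G$ has no spanning subgraph of diameter three, I would check that $G$ satisfies none of the three conditions of Theorem~\ref{thm:spanning-subgraph-diameter-3}: condition~\eqref{item-complete} fails since $\diam(G)=2\ne 1$; condition~\eqref{item-diameter-two} fails since the girth of $G$ is not at most four; and condition~\eqref{item-diameter-three} fails since $\diam(G)=2\ne 3$. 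The forward (``only if'') direction of Theorem~\ref{thm:spanning-subgraph-diameter-3} then yields that $G$ has no spanning subgraph of diameter three.

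I do not anticipate a real obstacle: the corollary is essentially Theorem~\ref{thm:spanning-subgraph-diameter-3} specialized to diameter-two graphs and repackaged in terms of Moore graphs. The only steps requiring any care are the two short structural observations in part~2 --- that a diameter-two tree is a star, and that containing a cycle forces the girth to be exactly five --- and both are routine.
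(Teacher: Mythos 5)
Your proposal is correct and follows exactly the route the paper intends: the paper gives no explicit proof, stating only that Theorem~\ref{thm:spanning-subgraph-diameter-3} implies the corollary, and your argument fills in precisely the expected details (specializing the theorem to diameter two, using the fact that a cyclic graph of diameter $d$ has girth at most $2d+1$ to get girth exactly five, and identifying the acyclic case as a star). No gaps.
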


\section{{\sf NP}-completeness results for \meda{d}, $d\ge 5$}\label{5meda}

We prove that \meda{5} is {\sf NP}-complete for graphs with diameter three and four.
Then we generalize this result to \meda{d} for all $d\ge 6$.

\begin{theorem}\label{3to5}
The \meda{5} problem is {\sf NP}-complete  for graphs of diameter $3$.
\end{theorem}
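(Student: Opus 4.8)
The plan is to prove NP-hardness by a polynomial reduction from a suitable NP-complete problem; membership in NP is immediate, since given a candidate edge set $F$ one can verify in polynomial time that $|F|\le k$, that $G-F$ is connected, and that $\diam(G-F)=5$ by computing all-pairs distances. For the hardness part, the natural source problem is one whose combinatorial structure matches "select a bounded-size set of edges that, when removed, stretches a few critical distances to exactly $5$." A clean candidate is \textsc{Vertex Cover} (or \textsc{Independent Set}), or possibly a restricted SAT variant; I would aim for \textsc{Vertex Cover} on a graph $H$ with $m$ edges, asking for a cover of size $\le t$.

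\textbf{Construction.} Given $(H,t)$ I would build a diameter-$3$ graph $G$ as follows. Start from a ``hub'' apparatus — typically two or three distinguished vertices (say $x,y$, possibly plus a universal-ish vertex $r$) — wired so that $\dist_G(x,y)=3$ in $G$ itself but so that breaking a small, controlled family of short $x$--$y$ connections forces $\dist(x,y)$ up to $5$. For each vertex $v\in V(H)$ create a vertex (or short gadget) attached to the hub; for each edge $e\in E(H)$ create an edge-gadget that provides a ``shortcut'' of length $2$ between $x$ and $y$ unless one of its two endpoint-vertices is ``selected'' (i.e., the corresponding edge is deleted). Add enough auxiliary edges (e.g., a dominating vertex adjacent to everything except $x$ and $y$, and the two edges $rx$, $ry$ which we pre-delete or forbid) so that every pair of vertices other than the critical $x,y$ pair is at distance $\le 3$ no matter which gadget edges we remove — this is what keeps the target diameter at exactly $5$ and not larger. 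Finally set the budget $k = t + (\text{fixed overhead per edge-gadget})$, chosen so that an optimal deletion set must spend exactly one edge per edge-gadget to kill its shortcut, and the vertices whose incident shortcut-edges get deleted form a vertex cover of $H$ of size $\le t$.

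\textbf{Correctness.} The forward direction: from a vertex cover $C$ of size $\le t$, delete in each edge-gadget the edge corresponding to an endpoint in $C$ (plus any forced overhead edges); then every $x$--$y$ path of length $<5$ has been destroyed while all other distances stay $\le 3$, so $\diam(G-F)=5$ and $|F|\le k$. The reverse direction: given $F$ with $|F|\le k$ and $\diam(G-F)=5$, first argue $G-F$ is connected and that the only pair realizing distance $5$ must be (essentially) $x,y$ — this uses the abundance of auxiliary edges to show all other pairs remain within distance $3$. Then argue that to push $\dist(x,y)$ from $3$ to $5$ one must delete, in every edge-gadget, at least one of its two ``selector'' edges (otherwise a length-$2$ or length-$3$ shortcut survives); by the budget this is tight, so $F$ induces a choice of one endpoint per edge of $H$, i.e., a vertex cover of size $\le t$. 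Care is needed to rule out ``cheating'' solutions that instead attack the hub directly — the hub should be made robust (high connectivity, or cut-edges that are too expensive) so that the only economical way to reach diameter $5$ is through the edge-gadgets.

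\textbf{Main obstacle.} The delicate point is the simultaneous enforcement of \emph{diameter exactly $5$} — not $\ge 5$, which (by the monotonicity remark) would be the easier \mda\ flavour. One must certify that \emph{no} deletion set within budget can create a pair at distance $\ge 6$ while also certifying that \emph{some} such set reaches $5$; this two-sided control over distances, together with maintaining $\diam(G)=3$ for the original instance and preserving connectivity of $G-F$, is where the gadget design has to be tuned carefully (e.g., bounding the diameter of $G-F$ by adding a dominating vertex that survives all relevant deletions). I expect this robustness bookkeeping — showing every non-$(x,y)$ pair is at distance $\le 3$ in every relevant $G-F$, and that an over-ambitious deletion cannot both disconnect nothing and overshoot to $6$ — to be the technical heart of the proof, while the reduction's ``logical'' core (vertex cover $\leftrightarrow$ selector-edge deletions) is routine.
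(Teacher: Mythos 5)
Your overall architecture (reduction from \textsc{Vertex Cover}, a distinguished pair stretched from distance $3$ to distance $5$, a highly connected ``hub'' keeping every other pair short) is the same as the paper's, and you correctly identify the two-sided control of the diameter as the technical crux. But as written the proposal has a genuine gap in the one place where you commit to a concrete design: the budget accounting. You set $k = t + (\text{fixed overhead per edge-gadget})$ and require ``one edge per edge-gadget to kill its shortcut,'' with the cover read off from which endpoints' selector edges were deleted. Under that scheme a solution deletes $|E(H)| + O(1)$ edges \emph{regardless} of how large the induced vertex cover is, so the budget cannot separate instances with a cover of size $\le t$ from those without one; the reduction would answer yes whenever $H$ has any vertex cover at all. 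The fix, which is what the paper does, is to put the selector edges at the \emph{vertex} gadgets so that a single deletion simultaneously kills the shortcuts of all edges incident to that vertex: each vertex $v$ gets a path $v_1$--$v_2$--$v_3$--$v_4$ between $s$ and $t$, and one must either delete $v_2v_3$ (cost $1$, $v$ not in the cover) or delete both $sv_2$ and $v_3t$ (cost $2$, $v$ in the cover); each edge $e=uv$ contributes length-$4$ paths $s$--$v_2$--$e_i$--$u_3$--$t$ that survive unless one endpoint pays the extra cost. The budget $k=|W|+c$ then encodes ``at most $c$ vertices are covered.''

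The second missing ingredient is how the hub is realized. The paper attaches two cliques $K^1,K^2$ of size $k+1$ so that every pair other than $(s,t)$ has $k+1$ edge-disjoint paths of length at most $4$ (hence cannot be stretched past $4$ by any $k$ deletions), while every $s,t$-path through the cliques has length exactly $5$; this simultaneously caps the diameter of $G-F$ at $5$, guarantees connectivity, and certifies that the unique candidate diametral pair is $(s,t)$. Your proposal gestures at ``a dominating vertex'' and ``high connectivity'' but a single dominating vertex would put all pairs at distance $\le 2$ and make diameter $5$ unreachable; the size-$(k+1)$ clique trick (edge-disjoint replacement paths exceeding the budget) is the device you need and do not supply. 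So the approach is right in outline, but the construction is not instantiated and the one instantiated choice would not work.
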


\begin{proof}
Since computing the diameter of a graph can be done in polynomial time, \meda{5} is in {\sf NP}.

Our reduction is from the {\sf NP}-complete \textsc{Vertex Cover} problem (see~\cite{GJ79}).

\begin{center}
\fbox{\parbox{0.95\linewidth}{\noindent
\textsc{Vertex Cover} \\[.8ex]
\begin{tabular*}{.93\textwidth}{rl}
{\em Input:} & A  graph $\Gamma=(W,E')$, an integer $c$. \\
{\em Question:} & Does $\Gamma$ have a vertex cover of size at most $c$\,?
\end{tabular*}
}}
\end{center}

Given an instance $(\Gamma=(W,E'),c)$ of \textsc{Vertex Cover} we create an instance $(G,k)$ of the
\meda{5} problem such that $G=(V,E)$ is of diameter~$3$ as follows.

We set $k=\vert W\vert+c$.

For each vertex $v\in W$ we create a path $v_1-v_2-v_3-v_4$.

For each edge $e=uv\in E'$ we create two paths $v_2-e_1-u_3$ and $v_3-e_2-u_2$.

There is a vertex $s$ with the edges $sv_1$ and $sv_2$ for every $v\in W$.

There is a vertex $t$ with the edges $tv_3$ and $tv_4$ for every $v\in W$.

Finally, there are  $K^1$ and $K^2$, two complete graphs each with $k+1$ vertices. For every vertex $u\in K^1$ and each  $v\in W$ there are the edges $uv_1$ ad $uv_2$, and there is an edge $ue_1$ for every $e\in E'$. For every vertex $u\in K^2$ and each $v\in W$ there are the edges $uv_3$ and $uv_4$, and there is an edge $ue_2$ for every $e\in E'$. There is an edge between any pair of vertices of $K^1\cup K^2$.

There are no other vertices or edges in $G$.

Clearly $(G,k)$ can be computed from $(\Gamma,c)$ in polynomial time.
Furthermore, it can be verified that $G$ has diameter $3$.

\medskip
Suppose that $(\Gamma,c)$ is a yes instance of \textsc{Vertex Cover}.
Let $S\subseteq W$ be a vertex cover in $\Gamma$ of size $\vert S\vert\le c$. We build $F\subseteq E$ as follows:
for each $v\in  S$ we put the edges $sv_2$ and $tv_3$ in $F$; for each $v\in W\setminus S$ we put edge $v_2v_3$ in $F$.
Thus $\vert F\vert\le k$. We show that $\dist_{G-F}(s,t)=5$. First, $s-v_1-z_1-z_2-v_4-t$ where $v\in W$, $z_1\in K^1$ and $z_2\in K^2$, is a path of length $5$ in $G-F$. Now let $P$ be a shortest $s,t$-path in $G-F$. Since $\dist_G(s,t)=3$ the length of $P$ is at least $3$. If $P$ has length less than $5$ it cannot contain a vertex of $K^1\cup K^2$. Assume that the length of $P$ is $3$.  Then $P=s-v_2-v_3-t$ for some $v\in W$. If $v\in S$ then $sv_2\in F$ and $P$ cannot be a path in $G-F$. If $v\not\in S$ then $v_2v_3\in F$ and $P$ cannot be a path in $G-F$. So $P$ has length at least $4$.
Assume that the length of $P$ is $4$. Suppose that the second vertex in $P$ is $v_1$ for some $v\in W$.
Then $P=s-v_1-v_2-v_3-t$, which is impossible since either $v\in S$ and $v_3t\in F$ or $v\not\in S$ and $v_2v_3\in F$.
So the second vertex of $P$ is $v_2$ for some $v\in W$. Therefore $sv_2\not \in F$, which implies that $v\not \in S$, and thus $v_2v_3\in F$.
Thus $P=s-v_2-e_i-w_3-t$ for some $e= vw\in E'$ and some $i\in \{1,2\}$. Since $v\not\in S$ and $S$ is a vertex cover of $\Gamma$, we have $w\in S$.
But $u_3t\in F$, a contradiction. We conclude that the length of $P$ is $5$. Now, for any pair of vertices $\{x,y\}\ne \{s,t\}$ we have $\dist_{G-F}(x,y)\le 4$ (passing through  the vertices of $K^1\cup K^2$). Thus any shortest path that is not a $s,t$-path has a length no more than $4$. Hence the diameter of $G-F$ is $5$ and $(G,k)$ is a yes instance of \meda{5}.

\medskip
Suppose that $(G,k)$ is a yes instance of \meda{5}. So there exists a set $F\subseteq E$, $\vert F\vert\le k$, such that the diameter of $G-F$ is $5$. It can be verified that for any two distinct vertices $x,y\in V(G)$ such that
$\{x,y\}\ne \{s,t\}$, there are at least $k+1$ edge-disjoint $x,y$-paths of length at most four in $G$ (using the vertices of $K^1\cup K^2$).
Therefore, $\dist_{G-F}(x,y)\le 4$. This implies that a shortest path of length $5$ in $G-F$ is an $s,t$-path. Note that each $s,t$-path with a vertex in $K^1\cup K^2$ has length at least five. We may thus assume that $F$ does not contain any edge incident to a vertex of $K^1\cup K^2$.

The only $s,t$-paths of length 3 in $G$ are of the form $s-v_2-v_3-t$ for $v\in W$. They are pairwise edge-disjoint, so at least $\vert W\vert$ of their edges are in $F$, at least one per path. The $s,t$-paths of length 4 in $G$ are paths $s-v_1-v_2-v_3-t$
or $s-v_2-v_3-v_4-t$ for $v\in W$, and paths $s-v_2-e_i-w_3-t$ for $e = vw\in E'$ and $i\in \{1,2\}$.
All the other $s,t$-paths of $G$ have length more than 4.

Suppose that $F$ contains an edge of the form $v_2e_i$. This edge cannot be in an $s,t$-path of length $3$. Since the path $s-v_2-e_i-u_3-t$ is a unique $s,t$-path of length $4$ that contains this edge, we can remove $v_2e_i$ from $F$ and add instead the edge $sv_2$ (if $sv_2$ is not already in $F$). So from now, by symmetry, we may assume that for every $v\in W$, the edges $v_2e_i$ and $e_iv_3$ are not in $F$.

Suppose that $F$ contains an edge of the form $v_1v_2$. Since there is exactly one $s,t$-path of length $4$ that contains this edge, the path $s-v_1-v_2-v_3-t$, and the edge $v_1v_2$ cannot be in an $s,t$-path of length $3$, we can remove $v_1v_2$ from $F$ and add instead the edge $v_2v_3$ (if $v_2v_3$ is not already in $F$). The same argument can be used with the edge $sv_1$. So from now, by symmetry, we may assume that each edge in $F$ is of the form
$sv_2$, $v_2v_3$, or $v_3t$ for some $v\in W$.

Since for every $v\in W$, set $F$ contains at least one of the edges of the path $s-v_1-v_2-v_3-t$, we infer that
$F$ contains at least one of the edges $v_2v_3$ and $v_3t$, for every $v\in W$. By symmetry, $F$ also contains at least one of the edges $sv_2$ and $v_2v_3$ for every $v\in W$. Thus, if for some $v\in W$ we have $v_2v_3\not \in F$, then $sv_2\in F$ and $v_3t\in F$.

Furthermore, if for some $v\in W$ we have $sv_2\in F$ and $v_3t\in F$, then we may assume that $v_2v_3\not \in F$.
Indeed, if $\{sv_2,v_2v_3,v_3t\}\subseteq F$ for some $v\in W$, then we can replace $F$ with $F\setminus\{v_2v_3\}$:
adding the edge $v_2v_3$ to the graph $G-F$ cannot create an $s,t$-path of length 4 or less since $sv_2\in F$ and $v_3t\in F$.

From the above we infer that $F$ has the following structure. For each $v\in W$ either $v_2v_3\in F$ or $sv_2,v_3t\in F$. Since $\vert F\vert\le k=\vert W\vert+c$, at most $c$ pairs $sv_2,v_3t$ are in $F$. From $F$ we build a set $S\subseteq W$ sas follows. We take $v\in S$ if and only if $sv_2,v_3t\in F$.
So $\vert S\vert \le c$. For each edge $uv\in E'$ we have $sv_2,v_3t\in F$ or $su_2,u_3t\in F$ else $s-v_2-e_i-u_3-t$ for some $i\in \{1,2\}$
would be an $s,t$-path in $G-F$ of length $4$. Thus $S$ is a vertex cover of $\Gamma$ and $(\Gamma,c)$ is a yes instance.
\end{proof}

\begin{theorem}\label{4to5}
The \meda{5} problem is {\sf NP}-complete for graphs of diameter $4$.
\end{theorem}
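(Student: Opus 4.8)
The plan is to obtain the result from Theorem~\ref{3to5} by a minimal modification of the reduction used there: attaching a single pendant vertex to one of the two cliques. Concretely, start from a \textsc{Vertex Cover} instance $(\Gamma=(W,E'),c)$, which we may assume satisfies $1\le c<|W|$ (if $c\ge|W|$ then $\Gamma$ is trivially a yes-instance), and let $(G,k)$ with $k=|W|+c$ be the diameter-$3$ instance of \meda{5} produced from $(\Gamma,c)$ in the proof of Theorem~\ref{3to5}; recall it contains cliques $K^1$ and $K^2$, each on $k+1$ vertices. Form $G'$ from $G$ by adding one new vertex $p$ together with the single edge $pz$, where $z$ is a fixed vertex of $K^1$. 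I will argue that $G'$ has diameter $4$ and that $(G',k)$ is a yes-instance of \meda{5} if and only if $(G,k)$ is; together with Theorem~\ref{3to5} and the trivial membership in {\sf NP}, this proves the theorem.

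First I would check $\diam(G')=4$. In $G$ the vertex $z$ is adjacent to all of $K^1\cup K^2$, to every $v_1$ and $v_2$ ($v\in W$), and to every $e_1$ ($e\in E'$), so $\dist_G(z,x)\le 2$ for all $x\ne t$, while $z$ and $t$ have no common neighbour and $z-z''-v_4-t$ has length $3$, so $\dist_G(z,t)=3$. Since $\diam(G)=3$, every pair of $G'$ avoiding $p$ is at distance at most $3$, and a pair $\{p,x\}$ is at distance $1+\dist_G(z,x)\le 4$, with equality for $\{p,t\}$; hence $\diam(G')=4$. The second step --- which I expect to be the main obstacle --- is a robustness property of the attachment point: for every $F\subseteq E(G)$ with $|F|\le k$ such that $G-F$ is connected and every $x\in V(G)$, one has $\dist_{G-F}(z,x)\le 3$. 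For $x\ne t$ this I would prove by exhibiting at least $k+1$ edge-disjoint $z,x$-paths of length at most two in $G$ --- routed through the clique vertices, through the remaining vertices of $K^1$ when $x$ is a non-clique neighbour of $z$, or through the vertices $v_1,v_2$ when $x=s$ (here using $2|W|\ge k+1$, which is where $c<|W|$ enters) --- so that at least one survives in $G-F$. For $x=t$ the key observation is that destroying every $z,t$-path of length at most three is too expensive: since $|F|<|K^2|$ some $z''\in K^2$ has $zz''\notin F$, and then for each $v\in W$ the paths $z-z''-v_3-t$ and $z-z''-v_4-t$ force $F$ to contain an edge of $\{z''v_3,v_3t\}$ and an edge of $\{z''v_4,v_4t\}$; over all $v\in W$ these are requirements on $2|W|$ pairwise disjoint pairs of edges, so $|F|\ge 2|W|>|W|+c=k$, a contradiction, whence $\dist_{G-F}(z,t)\le 3$.

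With the robustness claim available, the equivalence of the two instances is routine and would be the last step. Because $p$ has degree one, any admissible solution $F$ for $(G',k)$ avoids $pz$; hence $F\subseteq E(G)$, the graph $G'-F$ is connected precisely when $G-F$ is, distances within $V(G)$ agree in $G'-F$ and $G-F$, and $\dist_{G'-F}(p,x)=1+\dist_{G-F}(z,x)\le 4$ for all $x\in V(G)$ by robustness. Consequently $\diam(G'-F)$ is the maximum of $\diam(G-F)$ and a quantity that is at most $4$, so $\diam(G'-F)=5$ exactly when $\diam(G-F)=5$. Therefore $(G',k)$ is a yes-instance of \meda{5} iff $(G,k)$ is, i.e.\ (by Theorem~\ref{3to5}) iff $\Gamma$ has a vertex cover of size at most $c$. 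The delicate point throughout is the bound $\dist_{G-F}(z,t)\le 3$: without it, the pendant $p$ could realise distance $5$ to some vertex of $G$ even when $\diam(G-F)<5$, producing a spurious diameter-$5$ certificate in $G'$ with no counterpart in $G$ and breaking the reduction; this is exactly why $p$ must be hung on a clique vertex (whose large degree forces the bound) rather than, say, on a degree-two vertex of one of the subdivided paths.
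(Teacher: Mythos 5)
Your proposal is correct, but it takes a genuinely different route from the paper. The paper keeps the graph ``fat'' everywhere: it replaces the two cliques $K^1,K^2$ by four pairwise disjoint cliques $K^1,K^2,K^3,K^4$ on $k+1$ vertices each, with $K^3$ completely joined to $K^1\cup K^4$ and $K^4$ to $K^2\cup K^3$, so that the diameter rises to $4$ (realised by pairs $K^3$--$t$ and $K^4$--$s$) while every vertex pair other than $\{s,t\}$ still admits $k+1$ edge-disjoint short paths; the verification of Theorem~\ref{3to5} then carries over essentially verbatim with no new lemma. You instead attach a single pendant vertex $p$ to a clique vertex $z$, which forces you to prove the extra robustness statement that the eccentricity of $z$ in $G-F$ stays at most $3$ for every admissible $F$ --- and you correctly identify this as the crux, since $\dist_{G-F}(z,t)=4$ would let $\{p,t\}$ realise distance $5$ spuriously. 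Your counting arguments for that claim check out: $k+1$ edge-disjoint paths of length at most $2$ through the cliques for most targets, $2\lvert W\rvert\ge k+1$ paths through the $v_1,v_2$ for $x=s$, and the $2\lvert W\rvert$ pairwise disjoint two-edge ``blocking sets'' forcing $\lvert F\rvert\ge 2\lvert W\rvert>k$ for $x=t$; the harmless normalisation $c<\lvert W\rvert$ that these bounds require is properly flagged. What each approach buys: yours is more economical (one new vertex versus $2k+2$) and makes the fragile point of the reduction explicit, while the paper's is uniform, needs no side condition on the \textsc{Vertex Cover} instance, and --- a practical caveat if you intended your construction as a drop-in replacement --- is reused structurally in Lemmas~\ref{dtod+1} and~\ref{numberedge}, whose proofs refer to $\{s,t\}$ being a diametral pair and to copying the gadget; with your graph the unique diametral pair is $\{p,t\}$, so those later arguments would need minor adjustments.
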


\begin{proof}
The proof is the same as the proof of Theorem~\ref{3to5} except for the following.
Instead of two complete subgraphs $K^1,K^2$, here there are four  pairwise disjoint complete subgraphs $K^1,K^2,K^3,K^4$ with $k+1$ vertices each.
All the vertices of $K^3$ are adjacent to all the vertices of $K^1\cup K^4$.
All the vertices of $K^4$ are adjacent to all the vertices of $K^2\cup K^3$.
The vertices of $K^1$ and $K^2$ are linked to the rest of $G$ as previously.
The so obtained graph now has diameter $4$ (the diametral paths are from $u\in K^3$ to $t$ and from $v\in K^4$ to $s$).
\end{proof}

\begin{lemma}\label{dtod+1}
For any fixed $d\ge 5$ the
\meda{(d+1)} problem such that $G=(V,E)$ is of diameter~$d$ is {\sf NP}-complete.
\end{lemma}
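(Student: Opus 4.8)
The plan is to reduce the general case to the two base cases already established in Theorems~\ref{3to5} and~\ref{4to5}. Since \meda{5} is {\sf NP}-complete for input graphs of diameter $3$ and of diameter $4$, it suffices to exhibit, for each fixed $d\ge 5$, a polynomial-time reduction from one of these problems to \meda{(d+1)} restricted to inputs of diameter $d$. The natural device is a ``path gadget'': starting from an instance $(G_0,k)$ of \meda{5} with $G_0$ of diameter $3$ or $4$, I would attach a pendant path (or, to keep the diameter controlled, a ``thick'' path made of small cliques glued in sequence) to boost the diameter of the host graph by a prescribed additive amount, in such a way that (i) the diameter of the new graph $G$ is exactly $d$, and (ii) deleting at most $k$ edges makes the diameter $d+1$ if and only if deleting at most $k$ edges makes the diameter of $G_0$ equal to $5$.

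First I would fix, for each $d\ge 5$, which base case to start from: write $d = 5 + r$; if the extra length $r$ can be realized by an even-length addition I start from the diameter-$3$ version, otherwise from the diameter-$4$ version (mirroring exactly the parity bookkeeping already used to pass from Theorem~\ref{3to5} to Theorem~\ref{4to5}, where two extra cliques raised the diameter from $3$ to $4$). Concretely, I would take the diameter-$d_0$ instance $G_0$ ($d_0\in\{3,4\}$) produced by the earlier reduction, identify the vertex $t$ of $G_0$ (the endpoint of the long diametral $s,t$-paths), and append a chain of $d-d_0$ new cliques $L_1,\ldots,L_{d-d_0}$, each of size $k+1$, with all of $L_i$ complete to all of $L_{i+1}$, all of $L_1$ complete to $t$ (and to the neighbourhood structure of $t$ as needed), so that the farthest new vertices sit at distance exactly $d_0 + (d-d_0) = d$ from $s$. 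Using cliques of size $k+1$ guarantees, exactly as in the earlier proofs, that no optimal deletion set $F$ with $|F|\le k$ touches an edge inside or between these cliques, so $F\subseteq E(G_0)$; hence in $G-F$ the appended chain still contributes exactly $d-d_0$ to every relevant distance, and $\diam(G-F)=d+1$ iff the corresponding distance inside $G_0-F$ is one more than in $G_0$, i.e.\ iff $(G_0,k)$ is a yes instance of \meda{5}.

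The routine verifications are: (a) $\diam(G)=d$ exactly, which follows because the host graph already has the right diameter among its ``old'' vertex pairs (bounded by $d_0\le d$) and the new vertex pairs realize distance at most $d$ through the clique chain, with equality for the chain endpoints versus $s$; (b) membership in {\sf NP}, which is immediate since diameter is polynomial-time computable; and (c) the forward and backward implications, which are essentially inherited verbatim from the proof of Theorem~\ref{3to5} once one checks that the clique chain can always be short-circuited by routing through any $L_i$, so that every pair except the designated extremal pair stays at distance $\le d$ in $G-F$.

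\textbf{Main obstacle.} The delicate point will be (a) together with the interaction between the clique chain and the already-delicate distance analysis in $G_0$: I must ensure that attaching the chain to $t$ does not accidentally create a new pair of vertices at distance $d+1$ in $G$ itself (before any deletion), nor shorten any of the critical $s,t$-paths inside $G_0$, and that after deleting $F$ the only pair that can reach distance $d+1$ is the one controlled by the $\textsc{Vertex Cover}$ gadget. Getting the attachment exactly right -- in particular choosing whether to glue the chain at $t$ alone or at $t$ together with a few of its gadget-neighbours, and handling the parity so that $d-d_0$ cliques land the extremal distance at precisely $d$ rather than $d\pm1$ -- is where the real care is needed; the rest is bookkeeping already carried out in Theorems~\ref{3to5} and~\ref{4to5}.
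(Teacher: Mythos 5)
Your overall strategy --- attach a pendant structure at $t$ to a hard \meda{5} instance so as to inflate the input diameter to $d$ and the target to $d+1$ --- is exactly the paper's; the paper simply appends a plain pendant path $q_1,\dots,q_{d-4}$ at $t$, whose edges are cut-edges and hence can never belong to a feasible deletion set, so $F\subseteq E(G_0)$ comes for free from connectivity, with no need for $(k{+}1)$-sized clique layers. However, your plan contains one genuine error: the ``parity'' rule for choosing between the diameter-$3$ and diameter-$4$ base cases. There is no parity constraint here --- a pendant chain of any length $\ell\ge 1$ can be appended --- and, more importantly, the choice is forced: only the diameter-$4$ base works. A pendant chain attached at $t$ adds the same amount $\ell$ both to $\max_x\dist_{G_0}(x,t)$ (which governs $\diam(G')$) and to $\max_x\dist_{G_0-F}(x,t)$ (which governs $\diam(G'-F)$), so it preserves the \emph{gap} between the input diameter and the diameter achievable by deletion. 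That gap is $1$ for the diameter-$4$ instances of Theorem~\ref{4to5} (diameter $4$, target $5$) but $2$ for the diameter-$3$ instances of Theorem~\ref{3to5} (diameter $3$, target $5$). Starting from the diameter-$3$ base therefore produces hard instances of \meda{(d+2)} on diameter-$d$ graphs --- which is precisely what the paper exploits later, in Lemma~\ref{dtod+k+2} --- and not of \meda{(d+1)}; for every $d$ where your rule selects the diameter-$3$ base, the reduction proves the wrong statement. The fix is simply to always take $d_0=4$ and chain length $d-4$, as the paper does.

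Two smaller inaccuracies in your bookkeeping, neither fatal once the base case is fixed: in both base constructions $\dist_{G_0}(s,t)=3$ (not $d_0$), so the extremal pair of $G'$ before deletion is a vertex of $K^3$ against the end of the chain, at distance $4+(d-4)=d$, while $s$ is only at distance $d-1$ from the end of the chain; and for the backward direction the right claim is not that no small $F$ ``touches'' the chain, but that deleting chain edges can never change any relevant distance (with the paper's thin path such deletions are outright forbidden, since they would disconnect the graph).
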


\begin{proof}
Given an instance $(G,p)$ of the \meda{5} problem such that $G=(V,E)$ is of diameter~$4$ we create an instance $(G',p')$ of \meda{(d+1)} problem such that $G'=(V',E')$ is of diameter $d$ as follows. We take $p=p'$. Let $\{s,t\}$ be a diametral pair of $G$. Let $d'=d-4$. We add to $G$ a path with (new) vertices
$q_1,q_2,\ldots,q_{d'}$ and an edge $tq_1$. Let $G=(V,E)$ be a yes instance of \meda{5}. Let  $F\subseteq E, \vert F\vert\le p$ be such that $G-F$ has diameter $5$. Then $G'-F$ has diameter $5+d' = d+1$. Now, let $G'=(V,'E')$ be a yes instance of \meda{(d+1)} and  $F\subseteq E', \vert F\vert\le p'=p$ be such that $G'-F$ has diameter $d+1$.
Since $G'-F$ is connected, $F\subseteq E$ and $G-F$ is connected. It follows from the proof of Theorem~\ref{3to5} (which is the basis for the proof of Lemma~\ref{4to5}) that for every two vertices $x,y$ in $G$ such that $\{x,y\}\neq \{s,t\}$, we have
$\dist_{G'-F}(x,y)= \dist_{G-F}(x,y)\le 4$. This implies that vertices $s$ and $q_{d'}$ form the only diametral pair of $G'$ and thus $G-F$ has diameter $d+1-d'=d+1-(d-4)=5$.
\end{proof}

Clearly, Theorem~\ref{4to5} and Lemma~\ref{dtod+1} have the following consequence.

\begin{corollary}\label{d-meda}
For every $d\ge 5$, the \meda{d} problem is {\sf NP}-complete.
\end{corollary}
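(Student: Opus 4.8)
The plan is to obtain the statement by combining Theorem~\ref{4to5} with Lemma~\ref{dtod+1}, splitting the range $d\ge 5$ into the base case $d=5$ and the tail $d\ge 6$. First I would dispose of membership in {\sf NP}: for every fixed $d$, given a candidate set $F\subseteq E(G)$ with $|F|\le k$, one checks in polynomial time that $G-F$ is connected and computes $\diam(G-F)$ (for instance via all-pairs shortest paths), verifying that it equals $d$. Hence only {\sf NP}-hardness remains to be argued for each $d\ge 5$.

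For the base case $d=5$ there is nothing to do: Theorem~\ref{4to5} (equivalently, already Theorem~\ref{3to5}) asserts that \meda{5} is {\sf NP}-complete, even when the input graph is restricted to have a fixed diameter.

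For $d\ge 6$, I would invoke Lemma~\ref{dtod+1} with its parameter ``$d$'' replaced by $d-1$; this is legitimate precisely because $d-1\ge 5$. The lemma then yields that \meda{d} is {\sf NP}-complete (indeed already on input graphs of diameter $d-1$). As $d$ ranges over all integers at least $6$, this disposes of every value not covered by the base case, and the corollary follows.

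There is no genuine obstacle here: the difficulty has been entirely front-loaded into the reduction from \textsc{Vertex Cover} in Theorems~\ref{3to5} and~\ref{4to5} and into the path-appending gadget of Lemma~\ref{dtod+1}. The only point that needs a moment's care is the index bookkeeping — one must make sure that the hypothesis $d-1\ge 5$ required by Lemma~\ref{dtod+1} is met, which is exactly why the value $d=5$ is handled directly via Theorem~\ref{4to5} rather than through the lemma.
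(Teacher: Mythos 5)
Your proposal is correct and matches the paper's own derivation, which obtains the corollary exactly by combining Theorem~\ref{4to5} (the case $d=5$) with Lemma~\ref{dtod+1} applied with parameter $d-1\ge 5$ (the cases $d\ge 6$). The added remarks on {\sf NP} membership and on the index bookkeeping are fine and consistent with the paper.
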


We now show how to further adapt the above constructions to prove {\sf NP}-completeness for \meda{d} problems, $d\ge 5$, also with the additional restriction that the aim is to increase the diameter of the input graph for a fixed value $k\ge 2$. The possible values of $k$ for which we obtain {\sf NP}-completeness depend on the diameter $d$
of the target graph (or, equivalently, on the diameter of the input graph); see Theorem~\ref{dtod+k} for the precise statement.

\begin{lemma}\label{numberedge}
For any fixed $d\ge 4$ and $\delta\ge 1$ there are {\sf NP}-hard instances $(G,p)$ of the \meda{(d+1)} problem such that the following holds.
There is no set $F\subseteq E$ with fewer than $p$ edges such that $G-F$ has diameter $d+1$ and there is no edge set $F'\subseteq E,\vert F'\vert\le p+\delta,$ such that $G-F'$ has a diameter at least $d+2$.
\end{lemma}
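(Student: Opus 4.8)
The plan is to take the construction from Theorem~\ref{3to5} (and its variant in Theorem~\ref{4to5}/Lemma~\ref{dtod+1}) and modify it so that the optimal solution size $p$ is forced to be exactly $|W| + c$ where $c$ is the optimal vertex cover size, \emph{and} so that no set of $p + \delta$ edges can push the diameter above $d+1$. Concretely, I would start from the \meda{5} instance $(G,k)$ with $G$ of diameter $4$ (the output of Theorem~\ref{4to5}), recall from the analysis there that $k = |W| + c$ exactly captures the optimum, and then attach a rigid gadget that absorbs any extra budget. The natural candidate is to add, for the diametral pair $\{s,t\}$, a large number of internally-disjoint short $s,t$-paths (through enlarged or additional cliques) of length exactly $5$, so that after deleting any $p+\delta$ edges at least one such path of length $\le 5$ survives, while at the same time the only way to \emph{reach} diameter $d+2$ would require destroying all of them plus something more — which $p+\delta$ edges cannot do. Then the diameter-$d$ padding path $q_1,\dots,q_{d'}$ from Lemma~\ref{dtod+1} is appended to lift everything from $5$ to $d+1$.

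The key steps, in order: (1) fix the base \meda{5} instance $(G_0,k_0)$ of diameter $4$ coming from \textsc{Vertex Cover} on $(\Gamma,c)$, with $k_0 = |W| + c$; (2) argue, as in Theorem~\ref{3to5}, that $(G_0,k_0)$ is a yes instance iff $\Gamma$ has a vertex cover of size $\le c$, and moreover that \emph{no} set of $< k_0$ edges makes the diameter $5$ — this follows from the edge-disjointness counting already in that proof, since the $|W|$ length-$3$ paths $s\!-\!v_2\!-\!v_3\!-\!t$ each need an edge deleted and the $|E'|$ edge constraints force $\ge c$ more under the minimum vertex cover bound; (3) add $\delta$ (or more) extra disjoint cliques $K^5,\dots$ each providing a fresh length-$5$ $s,t$-path, wired so that these cliques contribute nothing to any $x,y$-distance for $\{x,y\}\neq\{s,t\}$ and so that bumping the $s,t$-distance to $6$ would require deleting at least one edge from each of $\Theta(\delta) + k_0$ edge-disjoint short paths, exceeding $p+\delta$ for a suitable choice of how many cliques we add; (4) attach the pendant path of length $d' = d-4$ at $t$ as in Lemma~\ref{dtod+1}, so the diameter target becomes $d+1$ and the unique diametral pair becomes $\{s, q_{d'}\}$; (5) set $p = k_0$ and verify both halves of the equivalence plus the two non-existence clauses.

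The main obstacle I expect is step~(3): making the extra short-path gadget simultaneously (a) not shorten any non-$s,t$ distance (so the rest of the diameter-$4$ analysis is untouched), (b) not itself \emph{lower} the minimum solution size below $k_0$ by offering a cheap way to kill the length-$3$ and length-$4$ $s,t$-paths, and (c) genuinely force $> p+\delta$ deletions to reach $s,t$-distance $6$. The cleanest way to guarantee (c) is a counting argument: exhibit $p + \delta + 1$ pairwise edge-disjoint $s,t$-paths of length $\le 5$ in the modified graph, so that any $F'$ with $|F'| \le p+\delta$ misses all the edges of at least one of them, keeping $\mathrm{dist}(s,t) \le 5$ and hence (after padding) the diameter $\le d+1$. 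Ensuring edge-disjointness while keeping the cliques large enough that no small $F'$ can disconnect them (the cliques should have $> p+\delta$ vertices, hence high edge-connectivity between their parts) is the delicate bookkeeping, but it is entirely analogous to the "$k+1$ edge-disjoint paths through $K^1\cup K^2$" device already used in Theorem~\ref{3to5}. Once the gadget is in place, steps (1)--(2), (4)--(5) are routine adaptations of the earlier proofs.
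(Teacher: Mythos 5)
Your overall strategy -- protect every pair other than $\{s,t\}$ by cliques with more than $p+\delta$ vertices, and then exhibit more than $p+\delta$ pairwise edge-disjoint $s,t$-paths of length at most $5$ so that any $F'$ with $\vert F'\vert\le p+\delta$ leaves one of them intact and hence cannot push $\dist(s,t)$ to $6$ -- is the right kind of argument, and it is logically sound as a skeleton. It is, however, a genuinely different construction from the paper's. The paper does not graft a new protective gadget onto a single copy of the reduction; instead it takes $\delta+1$ disjoint copies of the vertex-cover part of the graph (all the vertices $v_i$ and $e_j$), all sharing the same $s$, $t$ and the same cliques $K^1,\dots,K^4$ enlarged by a factor $\delta+1$, and sets $p_\delta=(\delta+1)p$. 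The independence of the copies then yields both conclusions at once: each copy separately needs at least $p$ deletions to push $\dist(s,t)$ to $5$, so pushing it to $6$ would require strictly more than $p$ deletions inside every one of the $\delta+1$ copies, i.e.\ more than $(\delta+1)p+\delta$ edges in total. The advantage of duplication is that nothing new has to be wired up: the copies automatically inherit all the distance properties already verified in Theorems~\ref{3to5} and~\ref{4to5}. Your variant keeps $p$ unchanged, which is aesthetically nicer, but at the price of having to design a gadget from scratch.

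That design is exactly where your proposal has a gap: step~(3) is a specification, not a construction. Saying the new cliques are ``wired so that they contribute nothing to any $x,y$-distance for $\{x,y\}\neq\{s,t\}$'' hides the whole difficulty, because every new vertex $z$ creates new pairs $\{z,y\}$ that must (a) be at distance at most $4$ in $G$ so the input diameter is unchanged, (b) not be raisable to distance exactly $5$ with at most $p$ deletions when $\Gamma$ has no small vertex cover (otherwise the backward direction of the reduction breaks, since a diameter-$5$ solution could avoid the $\{s,t\}$ pair entirely), and (c) not be raisable to distance $6$ with $p+\delta$ deletions; simultaneously the fresh $s,t$-path through the gadget must have length exactly $5$ without creating any $s,t$-path of length at most $4$ (which would kill the forward direction). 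A pendant path of length $5$ from $s$ to $t$, for instance, violates (a). One might hope to avoid the gadget altogether by counting the edge-disjoint short $s,t$-paths already present (the $\vert W\vert$ paths $s\!-\!v_2\!-\!v_3\!-\!t$ plus length-$5$ paths through the cliques), and restricting \textsc{Vertex Cover} to instances with $c\le \vert W\vert-\delta-1$, which remains {\sf NP}-hard for fixed $\delta$; but whether enough pairwise edge-disjoint length-$\le 5$ paths exist depends on the exact adjacencies between $K^1,K^2,K^3,K^4$ in the diameter-$4$ construction and would need to be checked. As written, the claim that the modified graph has $p+\delta+1$ edge-disjoint $s,t$-paths of length at most $5$ while all other properties of the reduction are preserved is asserted, not proved, so the proposal does not yet constitute a proof.
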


\begin{proof}
We showed in the proof of Theorem~\ref{3to5} that there are {\sf NP}-hard instances $(G,p)$ of \meda{5} for a graph $G$ with a diameter $3$ such that $p$ edges are necessary (that is, there is no set $F\subseteq E$ with fewer than $p$ edges such that $G-F$ has diameter $5$).
Hence through the proof of Theorem~\ref{4to5}  the same holds for an instance $(G,p)$ of \meda{5} when $G$ has a diameter $4$. From a such instance $(G,p)$ we build $(G_{\delta},p_{\delta})$ another instance of \meda{5} where $G_\delta$ has a diameter $4$. We take $p_{\delta}=(\delta+1) p$. We take $\delta+1$ copies of the subgraph of $G$ induced by the all vertices $v_i$ and $e_j$. Here the four cliques $K^1,K^2,K^3, K^4$ have size $\delta+1$ times the size they have in the proof of Theorem~\ref{4to5}. The copies of the vertices $v_i,e_j$ are connected in the same manner as above to the four cliques $K^1,K^2,K^3,K^4$. There are two vertices $s,t$ connected to the copies of the vertices $v_j$ as before. Hence the $\delta+1$ copies of $G$ are independent from each other. Note that $G_\delta$ has diameter $4$. As noted in the proof of Theorem~\ref{3to5}, only the edges between the vertices $s,t,v_i,e_j$ can be put inside the edge set $F$ that increase the diameter of the graph by one unit. Thus there is no set $F\subseteq E$ with fewer than $p'=(\delta+1)p$ edges such that $G_\delta-F$ has diameter $5$. Furthermore, there is no edge set $F'$ with at most $p'+\delta$ edges such that $G_\delta-F$ has diameter more than $5$, since the $\delta+1$ copies of $G$ are independent from each other. From the construction used in the proof of Lemma \ref{dtod+1} the same holds for an instance $(G,p)$ of \meda{(d+1)} when $G$ has diameter $d$.
\end{proof}

\begin{lemma}\label{dtod+k+1}
For every fixed $d$ and $k$, $d\ge 4,k\ge 1$, the \meda{(d+2k+1)} problem is {\sf NP}-complete  for graphs of diameter $d+k$.
\end{lemma}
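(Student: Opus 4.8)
The plan is to combine the gadget produced by Lemma~\ref{numberedge} with the path-elongation trick from Lemma~\ref{dtod+1}, applied $k$ times in succession, so that a graph of diameter $d+k$ is built whose diameter can only be pushed up to $d+2k+1$ by deleting edges inside the original gadget. First I would start from the {\sf NP}-hard instance $(G_\delta,p_\delta)$ of \meda{(d'+1)} for a graph of diameter $d'=4$ guaranteed by Lemma~\ref{numberedge}, instantiated with the slack parameter $\delta = k-1$ (or any $\delta\ge k-1$): this instance has the property that $p_\delta$ edge deletions are required to raise the diameter from $4$ to $5$, and that \emph{no} set of at most $p_\delta+(k-1)$ edges raises the diameter to $6$ or more. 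The point of the slack is that once we attach long pendant paths, a solution that "wastes" even a single deletion outside the gadget on one of the copies cannot make up for it elsewhere.

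Next I would attach to the diametral pair $\{s,t\}$ of this graph a pendant path on new vertices $q_1,\dots,q_{d+k-4}$ with an edge $tq_1$, exactly as in the proof of Lemma~\ref{dtod+1}. Writing $G'$ for the resulting graph, the same distance-preservation argument used there — namely that for every pair $\{x,y\}\neq\{s,t\}$ inside the gadget one has $\dist_{G'-F}(x,y)=\dist_{G-F}(x,y)\le 4$ because of the $p_\delta+1$ edge-disjoint short paths through the cliques — shows that $\diam(G') = (d+k-4)+4 = d+k$, that $s$ and $q_{d+k-4}$ form its only diametral pair, and that after deleting a set $F$ of at most $p_\delta$ edges the diameter of $G'-F$ is exactly $(d+k-4)+\dist_{G-F}(s,t)$. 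Consequently $\diam(G'-F)\ge d+2k+1$ iff $\dist_{G-F}(s,t)\ge 2k+5$; but by the second conclusion of Lemma~\ref{numberedge} (no $p_\delta+(k-1)$ edges give diameter $\ge 6$), together with the fact that the $\delta+1$ copies are mutually independent, the distance $\dist_{G-F}(s,t)$ cannot exceed $5$ unless $|F|$ is much larger than $p_\delta$ — so in fact the only way to reach target diameter $d+2k+1$ with few deletions is to lengthen the pendant path, not the gadget. \textbf{Here the construction needs one more ingredient:} I would instead attach a pendant path long enough that the reduction forces the solver to either (a) spend $p_\delta$ deletions making $\dist(s,t)=5$ \emph{inside} the gadget, giving diameter $d+k+1$, which is already $>d+k$ and hence — reading the statement carefully — the intended reduction is really to \meda{(d+2k+1)} via $k$ iterated applications of the length-$1$ path extension of Lemma~\ref{dtod+1}, each raising both the "current diameter" and the "target diameter" by one.

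So the cleanest route, and the one I would actually write, is: take the \meda{(d+k+1)}-instance of diameter $d+k$ produced by iterating Lemma~\ref{dtod+1} $k$ times starting from Theorem~\ref{4to5}, but starting from the \emph{quantitatively controlled} instance of Lemma~\ref{numberedge} with $\delta\ge 1$ rather than from the plain Theorem~\ref{3to5}/\ref{4to5} instance. Iterating the pendant-path extension $k-1$ more times (each time the new diametral pair is unique, and each time the "extra budget" $\delta$ is preserved because independent copies cannot share savings) yields a graph $G^{(k)}$ of diameter $d+k$ such that $(G^{(k)},p_\delta)$ is a yes instance of \meda{(d+2k+1)} iff the original \textsc{Vertex Cover} instance is a yes instance; the forward direction copies the vertex-cover-to-$F$ construction of Theorem~\ref{3to5} verbatim (then padding through the pendant paths), and the backward direction uses the distance-preservation lemma plus the "no $p_\delta+\delta$ edges reach the next diameter" guarantee to conclude that any small $F$ must concentrate on the Vertex-Cover edges. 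Membership in {\sf NP} is immediate since the diameter is polynomial-time computable. \textbf{The main obstacle} will be the bookkeeping in the backward direction: one must show that across the $\delta+1$ independent copies \emph{and} the $k$ pendant segments, a deletion set of size $\le p_\delta$ cannot simultaneously (i) fail to be a valid vertex cover in some copy and (ii) still achieve target diameter — this is exactly what the independence of the copies and the tight lower bound $p_\delta$ from Lemma~\ref{numberedge} are engineered to rule out, but stating it precisely requires carefully tracking which edges a shortest $(s,q_{\text{last}})$-path in $G^{(k)}-F$ can use.
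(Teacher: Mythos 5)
There is a genuine gap, and you in fact run into it yourself halfway through: with a pendant path made of plain edges, every deletion set $F$ of the allowed size leaves the path contribution fixed, so $\diam(G'-F)$ can exceed $\diam(G')$ by at most the amount by which $\dist(s,t)$ increases inside the gadget, which Lemma~\ref{numberedge} caps at $+1$. You then propose to repair this by iterating the length-one path extension of Lemma~\ref{dtod+1}, but that extension raises the \emph{current} diameter and the \emph{target} diameter by one each, so no matter how many times you iterate it the gap between target and current diameter stays equal to $1$. Lemma~\ref{dtod+k+1} requires a gap of $k+1$ (target $d+2k+1$ versus input diameter $d+k$), so this route cannot close; your final paragraph is really a proof of ``\meda{(d+k+1)} is {\sf NP}-complete for graphs of diameter $d+k$,'' which is a different (and weaker) statement.

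The missing ingredient is a pendant path whose own length can be \emph{bought up} by edge deletions. The paper attaches vertices $q_1,\dots,q_k$ with $t=q_0$ and, for each $i$, an extra vertex $r_i$ adjacent to $q_{i-1}$ and $q_i$, so that each segment is the chord of a triangle $T_i$. The path contributes $k$ to $\diam(G')=d+k$; deleting the $k$ chords $q_{i-1}q_i$ (which keeps the graph connected via the $r_i$) stretches the path contribution to $2k$, and combined with the $+1$ inside the gadget this reaches $d+2k+1$ at total cost $p'=p+k$. Conversely, connectivity forces $|F'\cap T_i|\le 1$, and the $\delta=k$ slack from Lemma~\ref{numberedge} guarantees the $k$ extra budget cannot instead be spent inside the gadget to gain more than $+1$ there; hence $F'\cap T_i=\{q_{i-1}q_i\}$ for all $i$ and $|F'\cap E(G)|\le p$, recovering a solution of the original \meda{(d+1)} instance. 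Your use of Lemma~\ref{numberedge} and of the distance-preservation argument is the right instinct, but without the triangle gadget there is no mechanism to convert the extra $k$ deletions into the extra $k$ units of diameter that the statement demands.
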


\begin{proof}
Given an instance $(G,p)$ of the
\meda{(d+1)} problem such that $G=(V,E)$ is of diameter~$d$ and satisfies the condition of Lemma \ref{numberedge} with $\delta = k$, we create an instance $(G',p')$ of \meda{(d+2k+1)} problem such that $G'=(V',E')$ is of diameter $d+k$ as follows. Let $p'=p+k$.
Let $\{s,t\}$ be a diametral pair of $G$. We add to $G$ a path with (new) vertices
$q_1,q_2,\ldots,q_{k}$ and an edge $tq_1$. We denote $t=q_0$. For each $i\in \{1,\ldots, k\}$ we add a new vertex $r_i$ with the two edges $q_{i-1}r_i$, $r_iq_i$. Let us denote by $T_i$ the edge set of the triangle containing $r_i$. Clearly $G'$ has a diameter $d+k$.  Let $G=(V,E)$ be a yes instance of \meda{(d+1)}. Let  $F\subseteq E, \vert F\vert\le p$ be such that $G-F$ has diameter $d+1$.
Let $F'=F \cup \{q_0q_1\}\cup\ldots\cup \{q_{k-1}q_k\}$. Then $\vert F'\vert= p+k\le p'$ and $G'-F'$ has diameter $d+1+2k$.
Now, let $G'=(V,'E')$ be a yes instance of \meda{(d+2k+1)} and $F'\subseteq E', \vert F'\vert\le p'$ be such that $G'-F$ has diameter $d+2k+1$.  From Lemma \ref{numberedge} the diameter $d$ of $G$ cannot be increased to $d+\delta$ where $\delta\ge 2,$ by deleting only $p+k=p'$ edges.
The graph $G'-F'$ is connected, so $\vert F'\cap T_i\vert\le 1$ whenever $1\le i\le k$. This implies that $F'\cap T_i=\{q_{i-1}q_i \},1\le  i\le k$ and $\vert F'\cap E\vert\le p$. Thus $F=F'\cap E$ is such that $G-F$ has diameter $d+1$.
\end{proof}

\begin{lemma}\label{dtod+k+2}
For every fixed $d$ and $k$, $d\ge 3,k\ge 1$, with $d+k\ge 5$ the \meda{(d+2k+2)} problem is {\sf NP}-complete  for graphs of diameter $d+k$.
\end{lemma}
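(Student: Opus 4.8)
The plan is to reduce from \textsc{Vertex Cover} following the template of the proof of Lemma~\ref{dtod+k+1}, the only genuine change being the base instance. In that proof $k$ triangles are hung off the diametral endpoint of a diameter-$d$ instance of \meda{(d+1)}, and deleting the $k$ ``straight'' edges of the triangles together with an optimal inner solution stretches the diametral pair from $d$ to $d+1$ and the triangle-chain from length $k$ to length $2k$, giving target diameter $d+2k+1$. Here I instead want to hang the $k$ triangles off a diameter-$d$ instance of \meda{(d+2)}: the inner solution then stretches the diametral pair from $d$ to $d+2$, and the target becomes $(d+2)+2k=d+2k+2$.

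So the first task is to produce, for every $d\ge 3$, a diameter-$d$ graph $\hat G$ with a distinguished diametral pair $\{s,\hat t\}$ that is a ``robust'' base for \meda{(d+2)} in the sense of Lemma~\ref{numberedge}: from a \textsc{Vertex Cover} instance $(\Gamma,c)$ we obtain $(\hat G,p)$ in polynomial time with (i) $\diam(\hat G-F)=d+2$ for some $F$ with $|F|\le p$ iff $(\Gamma,c)$ is a yes-instance; (ii) $p$ edges are necessary to reach diameter $d+2$; (iii) no set of at most $p+k$ edges increases the diameter to $d+3$ or more; and (iv) after deleting at most $p+k$ edges every vertex pair other than $\{s,\hat t\}$ is at distance at most $d+1$. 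For $d=3$ the reduction of Theorem~\ref{3to5} already gives such a graph of diameter $3$ for which the post-deletion diameter is $5$, with distinguished pair $\{s,t\}$, except that a single copy need not satisfy (iii); I would fix this by amplifying it exactly as in the proof of Lemma~\ref{numberedge}, taking $k+1$ vertex-disjoint copies of the inner part, keeping one pair $s,t$ joined to all copies, and enlarging the cliques by the factor $k+1$. Independence of the copies then yields (ii)--(iv) while the diameter stays $3$. For $d\ge 4$ I would append, as in the proof of Lemma~\ref{dtod+1}, a pendant path on $d-3$ new vertices at $t$, its far end becoming $\hat t$; since every pair not involving both $s$ and a far end of the path stays close, this raises the diameter to $d$ and the post-deletion diameter to $d+2$.

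With the base $\hat G$ in hand I build $G'$ exactly as in Lemma~\ref{dtod+k+1}: add vertices $q_1,\dots,q_k$ with the path $\hat t=q_0,q_1,\dots,q_k$, add for each $i$ a vertex $r_i$ adjacent to $q_{i-1}$ and $q_i$, and set $T_i=\{q_{i-1}q_i,\,q_{i-1}r_i,\,r_iq_i\}$; then $\diam(G')=d+k$ and I set $p'=p+k$. For the forward direction, given a vertex cover of size at most $c$, take the size-$p$ inner solution $F$ stretching $\{s,\hat t\}$ to distance $d+2$ and set $F'=F\cup\{q_0q_1,\dots,q_{k-1}q_k\}$; then $\dist_{G'-F'}(s,q_k)=(d+2)+2k$, and a routine check of the remaining pairs---those inside $\hat G$, those of the form $(x,q_j)$, those involving some $r_i$, and those inside the tail---shows $\diam(G'-F')=d+2k+2$. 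For the converse, if $\diam(G'-F')=d+2k+2$ with $|F'|\le p'$ then connectedness forces $|F'\cap T_i|\le 1$ for each $i$, property (iii) bounds $\diam(\hat G-(F'\cap E(\hat G)))$ by $d+2$, and the same case analysis shows that the unique pair able to realise distance $d+2k+2$ is $\{s,q_k\}$; this forces every edge $q_{i-1}q_i$ into $F'$ (so $|F'\cap E(\hat G)|\le p$) and forces $\hat G-(F'\cap E(\hat G))$ to have diameter exactly $d+2$ obtained with at most $p$ deletions, whence by property (i) there is a vertex cover of size at most $c$.

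The main obstacle is establishing property (iii) for the \meda{(d+2)} base, i.e.\ that at most $p+k$ deletions cannot push $\diam(\hat G)$ past $d+2$. This is exactly where a single copy of the gadget of Theorem~\ref{3to5} is not enough, so the amplification by $k+1$ copies, together with the verification that the copies (and the appended pendant path) interact only through $s$ and $t$, is the delicate step; the rest is shortest-path bookkeeping parallel to the proof of Lemma~\ref{dtod+k+1}.
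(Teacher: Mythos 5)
Your proposal is correct and follows essentially the same route as the paper, which proves this lemma by combining the diameter-$3$ to diameter-$5$ reduction of Theorem~\ref{3to5} (the ``$+2$'' base), the pendant-path extension of Lemma~\ref{dtod+1}, the copy-amplification of Lemma~\ref{numberedge}, and the triangle-chain of Lemma~\ref{dtod+k+1}. Your write-up in fact supplies more of the bookkeeping (the explicit robustness properties (i)--(iv) of the base instance) than the paper's own three-sentence sketch does.
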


\begin{proof}
From Theorem~\ref{3to5} we know that the \meda{5} problem is {\sf NP}-complete for graphs of diameter $3$. Using similar arguments as in the proof of Lemma \ref{dtod+1} the \meda{(d+2)} problem is {\sf NP}-complete  for graphs of diameter $d$. Now with similar arguments as those used in the proofs of Lemmas \ref{numberedge} and \ref{dtod+k+1} the proof is complete.
\end{proof}

Using Lemmas~\ref{dtod+1}, \ref{dtod+k+1}, and~\ref{dtod+k+2} we now derive the following general result.

\begin{theorem}\label{dtod+k}
For every fixed $d\ge 5$ and $k$ such that $1\le k\le d-1$, the \meda{(d+k)} problem is {\sf NP}-complete for graphs of diameter $d$.
\end{theorem}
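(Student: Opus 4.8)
The plan is to obtain this theorem as a direct corollary of Lemmas~\ref{dtod+1}, \ref{dtod+k+1}, and~\ref{dtod+k+2} via a short case analysis on the value of $k$. Membership in {\sf NP} is immediate, since the diameter of a graph can be computed in polynomial time, so only {\sf NP}-hardness needs attention. The case $k=1$ is nothing but Lemma~\ref{dtod+1}: its hypothesis $d\ge 5$ is exactly our assumption, so \meda{(d+1)} is {\sf NP}-complete for graphs of diameter $d$.

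For $k=2$, I would invoke Lemma~\ref{dtod+k+1} with its ``$d$''-parameter set to $d-1$ and its ``$k$''-parameter set to $1$. The lemma's hypotheses $d-1\ge 4$ and $1\ge 1$ hold because $d\ge 5$, and since $(d-1)+2\cdot 1+1=d+2=d+k$ and $(d-1)+1=d$, the lemma asserts precisely that \meda{(d+k)} is {\sf NP}-complete for graphs of diameter $d$. For $k\ge 3$, I would instead apply Lemma~\ref{dtod+k+2} with its ``$d$''-parameter set to $d-k+2$ and its ``$k$''-parameter set to $k-2$. The required inequalities $d-k+2\ge 3$, $k-2\ge 1$, and $(d-k+2)+(k-2)\ge 5$ hold because $k\le d-1$, $k\ge 3$, and $d\ge 5$, respectively; moreover $(d-k+2)+2(k-2)+2=d+k$ and $(d-k+2)+(k-2)=d$, so the lemma yields that \meda{(d+k)} is {\sf NP}-complete for graphs of diameter $d$. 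Since every integer $k$ with $1\le k\le d-1$ falls into exactly one of these three cases, the proof is complete.

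There is no genuine obstacle here beyond parameter bookkeeping; the only subtlety worth flagging is that Lemmas~\ref{dtod+k+1} and~\ref{dtod+k+2} both require their internal ``$k$''-parameter to be at least $1$, which is why the boundary values $k=1$ and $k=2$ must be peeled off and handled separately via Lemmas~\ref{dtod+1} and~\ref{dtod+k+1}, and one must check that the lower bounds $d-k+2\ge 3$ (resp. $d-1\ge 4$) on the internal ``$d$''-parameter translate into precisely the advertised range $1\le k\le d-1$ once $d\ge 5$ is assumed.
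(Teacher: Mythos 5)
Your proposal is correct and follows exactly the paper's own argument: the same three-way case split on $k$, invoking Lemma~\ref{dtod+1} for $k=1$, Lemma~\ref{dtod+k+1} with parameters $(d-1,1)$ for $k=2$, and Lemma~\ref{dtod+k+2} with parameters $(d-k+2,k-2)$ for $k\ge 3$, with the same verification of the hypotheses. There is nothing to add.
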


\begin{proof}
If $k=1$, then we use Lemma \ref{dtod+1}.

For $k= 2$, let $d'=d-1$ and $k' = 1$. Then $d'\ge 4$ and applying Lemma \ref{dtod+k+1} with $d'$ and $k'$ in place of $d$ and $k$, respectively, we have that \meda{(d' + 3)} is {\sf NP}-complete  for graphs of diameter $d'+1$. Since $d=d'+1$ and $d+2=d'+3$ the result follows.

For $k\ge 3$, let $d'=d-k+2$ and $k'=k-2$. Since $d\ge k+1$, we have $d'\ge 3$.
Since $k\ge 3$ and $d\ge 5$, we also have $k'\ge 1$ and $d'+k' \ge 5$.
 From Lemma \ref{dtod+k+2} we have that \meda{(d' + 2k' + 2)} is {\sf NP}-complete for graphs of diameter $d'+k'$.
 Since  $d=d'+k'$ and $d+k=d'+2k'+2$, the result follows.
\end{proof}

\section{Conclusion and future works}\label{concl}

We determined the computational complexity of several problems related to the blocker problem for the diameter of a graph.
In particular, we summarize in Table~\ref{table:summary} the complexity results for the
\meda{(d+k)} problem when restricted to graphs of diameter $d$ for various values of $d\ge 1$ and $k\ge 1$.

\begin{table}[h!]
\begin{center}
\footnotesize
  \begin{tabular}{|c|c|c|c|c|c|c|c|}
\hline
$d~\backslash~k$ & $1$ & $2$ & $3$ & $4$ & $5$ & $ 6$ & $7$\\
\hline
    $1$ & {\sf P} (T~\ref{thm:complete}) & {\sf P} (T~\ref{thm:complete}) & {\sf P} (T~\ref{thm:complete}) & {\sf P} (T~\ref{thm:complete})
        &{\sf P} (T~\ref{thm:complete}) & {\sf P} (T~\ref{thm:complete}) & {\sf P} (T~\ref{thm:complete}) \\
        \hline
    $2$ & {\sf P} (T~\ref{thm:meda3poly}) & ? & ? & ? & ? & ? & ? \\
        \hline
    $3$ & ? & {\sf NP}-c (T~\ref{3to5}) & ? & ? & ? & ? & ? \\
    \hline
    $4$ &  {\sf NP}-c (T~\ref{4to5}) & ? & ? & ? & ? & ? & ? \\
    \hline
    $5$ &  {\sf NP}-c (T~\ref{dtod+k}) & {\sf NP}-c (T~\ref{dtod+k}) & {\sf NP}-c (T~\ref{dtod+k}) & {\sf NP}-c (T~\ref{dtod+k}) & ? & ?  & ? \\
    \hline
    $6$ &  {\sf NP}-c (T~\ref{dtod+k}) & {\sf NP}-c (T~\ref{dtod+k}) & {\sf NP}-c (T~\ref{dtod+k}) & {\sf NP}-c (T~\ref{dtod+k}) & {\sf NP}-c (T~\ref{dtod+k}) & ? & ? \\
    \hline
    $7$ &  {\sf NP}-c  (T~\ref{dtod+k}) & {\sf NP}-c (T~\ref{dtod+k}) & {\sf NP}-c (T~\ref{dtod+k}) &{\sf NP}-c (T~\ref{dtod+k}) & {\sf NP}-c (T~\ref{dtod+k})& {\sf NP}-c (T~\ref{dtod+k})  & ? \\
    \hline
    $8$ &  {\sf NP}-c  (T~\ref{dtod+k}) & {\sf NP}-c (T~\ref{dtod+k}) & {\sf NP}-c (T~\ref{dtod+k}) &{\sf NP}-c (T~\ref{dtod+k}) & {\sf NP}-c (T~\ref{dtod+k}) &{\sf NP}-c (T~\ref{dtod+k})  &{\sf NP}-c (T~\ref{dtod+k}) \\
    \hline
\end{tabular}
\end{center}
\caption{Complexity of the \medalong{(d+k)} problem when restricted to graphs of diameter $d$, for various values of $d\ge 1$ and $k\ge 1$.
Rows are indexed by the value $d$ of the diameter of the input graph. Columns are indexed by the desired value $k$ of the diameter increase. {\sf P} denotes that the problem is solvable in polynomial time,
{\sf NP}-c that it is {\sf NP}-complete, and a question mark that the complexity is open. Next to each entry, we give the reference proving the corresponding statement (T stands for Theorem).
\label{table:summary}}
\end{table}

In conclusion, we list some related open problems and research directions that seem interesting for future consideration.
The \meda{d} problem is polynomial for $d = 3$ and {\sf NP}-complete for all $d\ge 5$. Thus, to obtain a dichotomy, one open problem remains, the complexity of \meda{4}. It would be natural to also investigate the \eda{d} and \mda{d} problems for $d\ge 4$. In the cases
of {\sf NP}-complete constant diameter augmentation problems, it would be interesting to understand
restrictions on the input graphs under which the problems become tractable, and perform a more detailed complexity analysis of the problems, for example with respect to parameterized complexity, existence of polynomial kernels, and approximability properties of the corresponding
optimization problems.

From the structural point of view, an interesting question related to the \edalong{d} problems is the following. What are the graphs $G$ such that the set of diameter values of the spanning connected subgraphs of $G$ forms an interval of consecutive integers? This class of graphs generalizes trees and complete graphs. Examples of graphs not in the class include Moore graphs of diameter two (for example, the Petersen graph).

\subsection*{Acknowledgements}

The authors are grateful to Nina Chiarelli and Ademir Hujdurovi\'c for fruitful discussions. The second named author is supported in part by the Slovenian Research Agency (I0-0035, research program P1-0285, and research projects J1-9110, N1-0102, and N1-0160). Part of this work was done while the second named author was visiting LAMSADE, University Paris-Dauphine; their support and hospitality is gratefully acknowledged.

\end{document}